\documentclass[a4paper, 12pt]{article}

\usepackage{amsfonts}
\usepackage {amssymb}
\usepackage {amsmath}
\usepackage {amsthm}
\usepackage{graphicx}
\usepackage{multirow}
\usepackage{xypic}
\usepackage {amscd}
\usepackage{mathrsfs}
\usepackage[colorlinks, linkcolor=blue, anchorcolor=black, citecolor=red]{hyperref}
\usepackage{enumerate}
\usepackage{enumitem}
\usepackage{geometry}  
\usepackage{tikz-cd}
\usepackage[all]{xy}

\usepackage{titlesec}

\newcommand{\bR}{\mathbb{R}}

\newtheorem{thm}{Theorem}[section]
\newtheorem{prop}[thm]{Proposition}
\newtheorem{defn}[thm]{Definition}

\newtheorem{lem}[thm]{Lemma}

\newtheorem{defn-prop}[thm]{Definition-Proposition}

\titleformat{\part}
  {\Large\bfseries}
  {\partname\ \thepart}
  {1em}
  {}

\titlespacing*{\part}{0pt}{3ex}{2ex}

\begin{document}

\title{Blaschke Conjecture and Complex Geometry}
\author{Kyobeom Song}
\date{\today}

\maketitle

\begin{abstract}
We provide a complex-geometric approach to the Blaschke conjecture, i.e., that a manifold whose injectivity radius equals its diameter is isometric to a compact rank-one symmetric space (CROSS). In particular, we introduce the great quadric fibration, a complex analogue of the great sphere fibration. Requiring its total space to be a complex submanifold simultaneously explains many properties that a Blaschke manifold is expected to possess, including its Clifford structure, Hopf fibration, and diffeomorphism class. Furthermore, the great quadric bundle coincides with the variety of minimal rational tangents (VMRT) of the ambient Fano variety in the standard CROSS cases, and we explain this through a bend-and-break argument in the setting of the Blaschke conjecture under a suitable complexification assumption. Combining this with Hwang--Mok VMRT recognition, we prove the Blaschke conjecture for manifolds admitting an adapted complex structure on their entire tangent bundles.
\end{abstract}
\section{Introduction}
A Riemannian manifold \((M,g)\) is called:
\begin{enumerate}
\item \emph{Blaschke} if its injectivity radius at every point equals its diameter.
\item \emph{Zoll} if all unit-speed geodesics are periodic with the same least period.
\end{enumerate}
Every Blaschke manifold is known to be Zoll \cite[Corollary~1]{M13}, but the converse does not hold in general. In 1921, Wilhelm Blaschke posed the question of whether a Blaschke surface in \(\mathbb R^3\) must be a round sphere \cite[\S86, Question~2]{B21}. This question was subsequently generalized as follows \cite{M13}.

\noindent\textbf{Blaschke conjecture.} Every Blaschke manifold is isometric to a compact rank-one symmetric space (CROSS).

By the Bott--Samelson theorem \cite{B12}, for every Blaschke or Zoll manifold \((M,g)\), there exist positive integers \(a\) and \(n\) such that \(\dim_{\mathbb R}M=an\) and every closed geodesic \(\gamma\) has Morse index \(\operatorname{ind}(\gamma)=a-1\). Moreover, the cohomology ring of \(M\) is determined by \(a\) and \(n\), and the possibilities are as follows:
\[
\begin{array}{c|ccccc}
(a,n) & (a,1) & (1,n) & (2,n) & (4,n) & (8,2) \\ \hline
H^*(M) & H^*(S^a) & H^*(\mathbb{RP}^n) & H^*(\mathbb{CP}^n) & H^*(\mathbb{HP}^n) & H^*(\mathbb{OP}^2)
\end{array}
\]
Here, following a convention similar to that of Allamigeon--Warner \cite{A65,B12}, we adopt the following terminology.
\begin{defn}
A Blaschke manifold, respectively a Zoll manifold, of real dimension \(an\) for which every closed geodesic has Morse index \(a-1\) is called a \emph{Blaschke manifold of type \((a,n)\)}, respectively a \emph{Zoll manifold of type \((a,n)\)}.
\end{defn}
What is remarkable about the Blaschke conjecture is that the CROSSes, namely, \(S^n\), \(\mathbb{RP}^n\), \(\mathbb{CP}^n\), \(\mathbb{HP}^n\), and \(\mathbb{OP}^2\), possess highly sophisticated Clifford, Lie-theoretic, algebraic, and geometric structures, different even from one another, yet all of these are characterized by the single condition that the injectivity radius equals the diameter. Indeed, the Blaschke conjecture has long been approached from many different directions in geometry, bringing numerous fields into interplay. From a Morse-theoretic viewpoint, Bott--Samelson \cite{B54} classified all possible integral cohomology rings of Zoll manifolds by analyzing the energy functional and the space of geodesic loops. From the viewpoint of geometric analysis, Weinstein \cite{W74}, Berger and Kazdan \cite{BK80,K82}, and C.~T.~Yang \cite{Y80} analyzed the Jacobi equation together with the equality cases of sharp volume inequalities to show that Blaschke manifolds of types \((a,n)=(a,1)\) and \((1,n)\) must be isometric to the standard \(S^a\) and \(\mathbb{RP}^n\), respectively. From the viewpoint of symplectic and differential topology, Gluck, Warner, and Yang \cite{GWY83}, McKay \cite{M05}, and Kramer and Stolz \cite{KS07} used incidence geometry associated with great sphere fibrations to obtain smooth equivalence in the plane cases \(\mathbb{CP}^2\), \(\mathbb{HP}^2\), and \(\mathbb{OP}^2\). For further references, we recommend the classical book of Besse \cite{B12} and the modern survey of McKay \cite{M13}.

In this paper, we present a complex-geometric approach to the Blaschke conjecture. Indeed, the standard CROSSes admit rich ambient complex structures that motivate this viewpoint. We present these in the following table \cite{PW91}.
\begin{table}[htbp]
\centering
\small
\begin{tabular}{c|ccccc}
\(M\) & \(S^n\) & \(\mathbb{RP}^n\) & \(\mathbb{CP}^n\) & \(\mathbb{HP}^n\) & \(\mathbb{OP}^2\) \\ \hline
\(TM\) & \(Q^n\setminus D\) & \(\mathbb{CP}^n\setminus D\) & \((\mathbb{CP}^n\times\mathbb{CP}^n)\setminus D\) & \(\operatorname{Gr}(2,2n+2)\setminus D\) & \((E_6/P_1)\setminus D\) \\
\(D\) & \(Q^{n-1}\) & \(Q^{n-1}\) & \(\{z\cdot w=0\}\) & \(\operatorname{IG}(2,2n+2)\) & \(F_4(\mathbb C)/P_4\)
\end{tabular}
\caption{Ambient complex spaces of CROSSes.}\label{amb}
\end{table}

Here, \(TM\) carries the natural complex manifold structure on the tangent bundle of the CROSS \(M\), and \(D\) is the ample divisor added to compactify it to a projective manifold. For Zoll manifolds, there are results asserting that, under certain complexification assumptions, a biholomorphic characterization of the above structures forces the base manifold to be isometric to a CROSS \cite[\(S^n\)]{BL18}, \cite[\(\mathbb{CP}^n\)]{LS26}, \cite[\(\mathbb{HP}^2\)]{S25}. Thus, the metric rigidity of Blaschke manifolds is closely related to ambient complex rigidity. This naturally motivates the idea of considering complex analogues of structures associated with Blaschke manifolds.

In particular, we focus on the following structure. A Blaschke manifold of type \((a,n)\) admits a great sphere fibration
\[
S^{a-1}\longrightarrow S(T_pM)\simeq S^{an-1}\xrightarrow{\rho}B_p.
\]
Here, \(\rho\) assigns to each unit vector the point at which the geodesic in that direction reaches the cut locus. This fibration is a principal tool in the topological approaches mentioned above. In particular, identifying this bundle with the standard Hopf fibration is known to be equivalent to showing that the Blaschke manifold is homeomorphic to a CROSS. However, McKay \cite[p.~20, Section~20]{M13} points out that this structure forgets too much geometric information for its analysis to readily yield isometric rigidity.

A complex analogue of the great sphere fibration, however, offers the possibility of obtaining more information. First, consider the real rank-\(a\) vector bundle \(\mathcal V_p\subset B_p\times T_pM\) obtained by taking the linear spans of the fibers \(S^{a-1}\) in the trivial bundle \(B_p\times T_pM\). Consider its complexification \(\mathcal V_p^{\mathbb C}:=\mathcal V_p\otimes_{\mathbb R}\mathbb C\), with \(\mathcal V_p^{\mathbb C}\subset B_p\times T_pM^{\mathbb C}\). We call each bundle a \emph{real/complex Blaschke bundle}. Whereas the great sphere fibration consists of vectors in \(\mathcal V_p\) satisfying \(g(v,v)=1\), a natural analogue in \(\mathcal V_p^{\mathbb C}\) is the projectivization of the isotropic vectors, namely, those satisfying \(g_{\mathbb C}(\xi,\xi)=0\) for the complex-bilinear extension \(g_{\mathbb C}\) of \(g\). Denote this projectivization by \(Z_p\subset\mathbb P(T_pM^{\mathbb C})\). Each fiber is then a quadric \(Q^{a-2}\) in \(\mathbb P(\mathbb C^a)\). Thus we have the correspondence
\[
\bigl(S^{a-1}\longrightarrow S(T_pM)\xrightarrow{\rho}B_p\bigr)
\quad\longleftrightarrow\quad
\bigl(Q^{a-2}\longrightarrow Z_p\xrightarrow{\rho}B_p\bigr).
\]
We call this complex analogue of the great sphere fibration the \emph{great quadric fibration}, and its total space \(Z_p\) the \emph{great quadric bundle}. In contrast to the rather simple total space \(S^{an-1}\) of the sphere fibration, the great quadric bundle has the following nontrivial forms in the canonical cases.
\begin{table}[htbp]
\centering
\begin{tabular}{c|ccccc}
\(M\) & \(S^n\) & \(\mathbb{RP}^n\) & \(\mathbb{CP}^n\) & \(\mathbb{HP}^n\) & \(\mathbb{OP}^2\) \\ \hline
\(Z_p\) & \(Q^{n-2}\) & Undefined & \(\mathbb P^{n-1}\sqcup\mathbb P^{n-1}\) & \(\mathbb P^1\times\mathbb P^{2n-1}\) & \(\mathbb S_5\)
\end{tabular}
\caption{Great quadric bundles.}\label{qb}
\end{table}

Remarkably, except in the case of \(\mathbb{RP}^n\), these are the varieties of minimal rational tangents of the ambient Fano varieties \(X:=TM\sqcup D\) in Table~\ref{amb} \cite{FH12}. Indeed, identifying \(T_pM^{\mathbb C}\) with \(T_pX\) through the totally real embedding \(M\subset X\), we find that \(Z_p\) coincides exactly with the VMRT for each CROSS. This is no coincidence, including the absence of a corresponding object for \(\mathbb{RP}^n\). A geodesic \(\gamma\) of a CROSS extends to a complex curve \(\phi_\gamma\) in the ambient space \(X\), meeting \(D\) transversely at two points. The divisor \(D\) defines a primitive polarization on \(Q^n\), \(\mathbb{CP}^n\times\mathbb{CP}^n\), \(\operatorname{Gr}(2,2n+2)\), and \(E_6/P_1\), whereas \(D=Q^{n-1}\) has degree \(2\) in \(\mathbb{CP}^n\). Thus the complexification \(\phi_\gamma\) of a geodesic in \(\mathbb{RP}^n\) is a minimal rational curve in \(X=\mathbb{CP}^n\), while in the other cases it is a conic in \(X\). The tangent directions of the curves obtained by applying bend-and-break to these conics while fixing \(p\) and an antipodal point \(q\) are precisely the \(g_{\mathbb C}\)-isotropic directions. See Section~\ref{bacs} for a more detailed description.

However, while the standard \(Z_p\) are symmetric as in Table~\ref{qb}, there is no such guarantee for a general Blaschke manifold, and \(Z_p\) is a priori only a smooth submanifold of \(\mathbb P(T_pM^{\mathbb C})\). Our main observation is that the single assumption that \(Z_p\subset\mathbb P(T_pM^{\mathbb C})\) is a complex submanifold yields the following.
\begin{thm}\label{hopfiso}
Let \((M,g)\) be a Blaschke manifold of type \((a,n)\), let \(p\in M\), and let \(\mathbb K=\mathbb C,\mathbb H,\mathbb O\) according as \(a=2,4,8\). Consider the great quadric fibration \(Q^{a-2}\longrightarrow Z_p\xrightarrow{\rho}B_p\), and suppose that \(Z_p\subset\mathbb P(T_pM^{\mathbb C})\) is a complex submanifold. Then the following hold.
\begin{enumerate}
\item The projective equivalence class of \(Z_p\) is given by
\[
\begin{array}{c|ccc}
(a,n) & (2,n) & (4,n) & (8,2) \\ \hline
Z_p & \mathbb P^{n-1}\sqcup\mathbb P^{n-1} & \mathbb P^1\times\mathbb P^{2n-1} & \mathbb S_5
\end{array}
\]
\item The real Blaschke bundle \(\mathcal V_p\subset B_p\times T_pM\) and its associated great sphere fibration \(S(V_q)\longrightarrow S(T_pM)\xrightarrow{\rho}B_p\), where \(V_q:=(\mathcal V_p)_q\), are orthogonally isomorphic to the standard real rank-\(a\) Hopf bundle \(\gamma_{\mathbb K}\subset\mathbb KP^{n-1}\times\mathbb K^n\) and the standard Hopf fibration \(S(\mathbb K)\longrightarrow S(\mathbb K^n)\xrightarrow{\pi}\mathbb KP^{n-1}\), respectively. More precisely, there exist a linear isometry \(\phi:(T_pM,g_p)\to\mathbb K^n\), a diffeomorphism \(\varphi:B_p\to\mathbb KP^{n-1}\), and a vector bundle isomorphism \(\widetilde\phi:\mathcal V_p\to\gamma_{\mathbb K}\), given by \(\widetilde\phi(q,v)=(\varphi(q),\phi(v))\), such that the following diagrams commute:
\[
\begin{tikzcd}[column sep=small]
\mathcal V_p \arrow[r,hook] \arrow[d,"\widetilde\phi"']
& B_p\times T_pM \arrow[d,"\varphi\times\phi"] \\
\gamma_{\mathbb K} \arrow[r,hook]
& \mathbb KP^{n-1}\times\mathbb K^n
\end{tikzcd}
\qquad
\begin{tikzcd}[column sep=small]
S(T_pM) \arrow[r,"\rho"] \arrow[d,"\phi"']
& B_p \arrow[d,"\varphi"] \\
S(\mathbb K^n) \arrow[r,"\pi"']
& \mathbb KP^{n-1}
\end{tikzcd}
\]
In particular, \(M\) is diffeomorphic to \(\mathbb KP^n\).
\item For each fiber \(V_q\) of the real Blaschke bundle, equipped with the metric \(g_p\), there exists an even Clifford representation \(c_q:\operatorname{Cl}^0(V_q,g_p)\to\operatorname{End}_{\mathbb R}(T_qB_p)\) with the following property. Let \(Q:=\rho^{-1}(q)\simeq Q^{a-2}\). Under the real vector bundle identification \(N^{\mathbb R}_{Q/Z_p}\simeq Q\times T_qB_p\), the complex structure of the holomorphic normal bundle \(N_{Q/Z_p}\) defines a family \(J_q:Q\to\operatorname{End}_{\mathbb R}(T_qB_p)\), and \(J_q([\alpha+i\beta])=c_q(\alpha\beta)\) whenever \(\alpha,\beta\in V_q\) are orthonormal.
\end{enumerate}
\end{thm}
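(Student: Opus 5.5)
The plan is to prove part 3 first, since it is the local linear-algebraic core, then use the resulting Clifford structure to pin down the projective geometry in part 1, and finally read off the bundle isomorphism in part 2.

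\textbf{Local structure of \(Z_p\) along a fiber.} Fix \(q\in B_p\) and set \(V_q=(\mathcal V_p)_q\). The fiber \(Q=\rho^{-1}(q)\) is the isotropic quadric in \(\mathbb P(V_q^{\mathbb C})\). A point \([\alpha+i\beta]\) with \(\alpha,\beta\in V_q\) orthonormal corresponds to the oriented 2-plane \(\langle\alpha,\beta\rangle\), and real tangent vectors to \(Z_p\) transverse to \(Q\) are obtained by moving \(q\) in \(B_p\). Using the standard identification \(T_{\rho(v)}B_p\simeq V_{\rho(v)}^{\perp}\), coming from the Jacobi field description of the cut locus, one gets \(N^{\mathbb R}_{Q/Z_p}\simeq Q\times T_qB_p\). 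Holomorphicity of \(Z_p\) says that multiplication by \(i\) on \(T_{[\xi]}\mathbb P(T_pM^{\mathbb C})\) preserves \(T_{[\xi]}Z_p\). This induces a complex structure \(J_q([\alpha+i\beta])\) on \(T_qB_p\). Writing a variation of \(\xi=\alpha+i\beta\) through \(Z_p\) as \(\dot\alpha+i\dot\beta\), with \(\dot\alpha,\dot\beta\) the derivatives of \(\alpha,\beta\) along a curve in \(B_p\) (components in \(V_q^\perp\)), the condition that \(i(\dot\alpha+i\dot\beta)=-\dot\beta+i\dot\alpha\) is again tangent says that rotating \(\alpha\) to \(\beta\) inside \(V_q\) induces a fixed endomorphism on the normal directions.

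\textbf{Constructing the Clifford representation.} I would then define \(c_q(\alpha\beta):=J_q([\alpha+i\beta])\) and verify the relations of \(\operatorname{Cl}^0(V_q)\). First, antisymmetry: \([\alpha-i\beta]\) gives \(-J\). Second, \(J^2=-1\). Third, the key identity \(c(\alpha\beta)c(\beta\gamma)=c(\alpha\gamma)\) for orthonormal \(\alpha,\beta,\gamma\). Because \(J_q\) is a smooth map from \(Q\) to complex structures that depends only on the oriented plane, the natural route is to show that \(\alpha\wedge\beta\mapsto J_q\) extends linearly on \(\Lambda^2V_q\simeq\mathfrak{so}(V_q)\) and is a Lie algebra homomorphism. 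Linearity would come from holomorphicity: the \((1,0)\)-part of the normal variation depends complex-linearly on \(\xi\in V_q^{\mathbb C}\) restricted to the isotropic cone. Anticommutation of \(c(\alpha\beta)\) and \(c(\alpha\gamma)\) follows by polarizing \(J^2=-1\) along the circle \(\cos t\,\beta+\sin t\,\gamma\).

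\textbf{Part 1.} With the Clifford module \(T_qB_p\) over \(\operatorname{Cl}^0(V_q)\) of dimension \(a(n-1)\), the normal bundle of each fiber \(Q\) in \(Z_p\) is the homogeneous bundle attached to a spinor-type representation. This gives \(\mathcal O(1)^{\oplus(n-1)}\)-type data for \(a=2\), \(\mathcal O(1)\boxtimes\mathbb C^{2n-2}\)-type data for \(a=4\), and a spinor bundle on \(Q^6\) for \(a=8\). I would then use a rigidity/recognition result for complex submanifolds of projective space that are swept by linearly embedded quadrics with such normal bundles, together with the dimension count \(\dim_{\mathbb C}Z_p=a(n-1)+a-2\), to identify \(Z_p\) projectively as listed. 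Alternatively, the data can be used to build the quaternionic or octonionic multiplication on \(T_pM\) directly. Completing this identification is the step I expect to be the main obstacle: passing from pointwise Clifford data to a global projective identification, especially proving linearity of \(J_q\) in the plane and handling the \(\mathbb S_5\) case.

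\textbf{Part 2.} Given the projective equivalence from part 1, which I would arrange to preserve the real structure and \(g_{\mathbb C}\) (both being intrinsic to \(Z_p\) as the isotropic locus), the real points determine the subspaces \(V_q\) as the standard \(\mathbb K\)-lines. This yields \(\phi\), \(\varphi\), and \(\widetilde\phi\). The diffeomorphism \(M\simeq\mathbb KP^n\) then follows from the known fact that the manifold is obtained by attaching a disk along the Hopf fibration.
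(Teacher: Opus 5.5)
Your outline has genuine gaps. The main one is Part~1, which you yourself flag. There is no available ``recognition theorem for submanifolds swept by quadrics with given normal bundle'', and the local Clifford data alone cannot determine $Z_p$.

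\textbf{Part 1, case $a=2$.} The fiber $Q^0$ is two points, so there is no normal-bundle information at all. The paper instead uses $\pi_1(B_p)=0$ to split $Z_p=Z_+\sqcup Z_-$. It identifies the circle bundle of $\mathcal O(-1)|_{Z_+}$ with the great circle fibration $S^1\to S^{2n-1}\to B_p$ and reads off $\int_{Z_+}H^{n-1}=1$ from the Gysin sequence. Hence each $Z_\pm$ is a linear $\mathbb P^{n-1}$.

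\textbf{Part 1, case $a=4$.} A $\operatorname{Cl}^0(V_q)\simeq\mathbb H\oplus\mathbb H$-module may contain both irreducible types. So one only gets $N_{Q/Z}\simeq\mathcal O(1,0)^{\oplus 2k_-}\oplus\mathcal O(0,1)^{\oplus 2k_+}$ with $k_++k_-=n-1$ unknown. Your ``$\mathcal O(1)\boxtimes\mathbb C^{2n-2}$'' presupposes the answer.

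\textbf{Part 1, the missing global step.} The Serre spectral sequence over the simply connected $B_p$ gives $H^*(Z_p)$ and $b_2$. Adjunction turns the normal bundle into $(-K_Z)|_Q\simeq\mathcal O(2k_-+2,2k_++2)$, resp.\ $8H$ for $a=8$ (using $\det\mathcal S_\pm\simeq\mathcal O(-2)$ for Ottaviani's spinor bundles). An abstract characterization then applies:
\begin{itemize}
\item For $a=4$, Occhetta's theorem on unsplit covering families applies; the two rulings are globally distinguished since $B_p$ is simply connected. It yields $\mathbb P^{2k_-+1}\times\mathbb P^{2k_++1}$, and cohomology then forces $\mathbb P^1\times\mathbb P^{2n-1}$.
\item For $a=8$, Mukai's classification of coindex-$3$ Fano manifolds of Picard number one applies, with genus $\ge7$ because $Z_p$ spans $\mathbb P^{15}$.
\end{itemize}
Your dimension count is also off: $\dim_{\mathbb C}Z_p=\tfrac{a(n-1)}{2}+a-2$.

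\textbf{Part 2.} This rests on a false claim: $g_{\mathbb C}$ and the real structure are not intrinsic to $Z_p$. The projective equivalence from Part~1 is merely some complex-linear $f$ with $\mathbb P(f)(Z_p)=Z_0$. Moreover, $Z_0$ lies on a large linear system $I_2$ of quadrics, e.g.\ $\Lambda^2A^*\otimes\Lambda^2W^*$ for the Segre variety and a $10$-dimensional space for $\mathbb S_5$. Two further steps are needed:
\begin{itemize}
\item Show that the linear automorphisms preserving $Z_0$ act transitively on the nondegenerate forms in $I_2$. For $\mathbb S_5$ this uses that $I_2$ is the vector representation of $\operatorname{Spin}(10,\mathbb C)$ and that the degenerate forms are exactly its null quadric. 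This upgrades $f$ to a complex isometry $F$.
\item Make $F$ real. The paper uses the polar decomposition $F=UP$: $F^{\mathsf t}F=I$ forces $\overline U=U$ and $\overline P=P^{-1}$. Conjugation invariance gives $P^2(Z_p)=Z_p$, and an eigenvalue argument on the homogeneous ideal gives $P(Z_p)=Z_p$. So the real orthogonal $U$ maps $Z_p$ to $Z_0$.
\end{itemize}

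\textbf{Part 3.} Linearity of $\alpha\wedge\beta\mapsto J$ should be an output, not an input. What is linear by construction is $z\mapsto C_z:=dG_q(\cdot)(z)$ on all of $V_q^{\mathbb C}$, since the differential of the classifying map into the Grassmannian is a $\operatorname{Hom}$. $C_v$ is invertible for real $v\ne0$ because the great spheres are disjoint. Holomorphicity gives $C_{\alpha+i\beta}(\xi+iJ\xi)=0$, i.e.\ $J_{[\alpha+i\beta]}=-C_\alpha^{-1}C_\beta=C_\beta^{-1}C_\alpha$. From this, $L_z:=C_{e_0}^{-1}C_z$ satisfies $L_i^2=-1$ and $L_iL_j=-L_jL_i$.

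This also shows $J_{[\alpha+i\beta]}J_{[\beta+i\gamma]}=-J_{[\alpha+i\gamma]}$, so your ``key identity'' has the wrong sign. The correct identity matches $(\alpha\beta)(\beta\gamma)=-\alpha\gamma$ in the convention $v^2=-g(v,v)$ that the statement requires.
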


Thus, the condition that the smooth embedding \(Z_p\subset\mathbb P(T_pM^{\mathbb C})\) is holomorphic:
\begin{enumerate}
\item makes \(Z_p\) itself rigid, identifying it with the standard model;
\item makes the great sphere fibration orthogonally isomorphic to the Hopf fibration;
\item consequently makes the base manifold \(M\) diffeomorphic to the standard model;
\item even recovers the different Clifford structures through the same principle.
\end{enumerate}

In particular, in the \(\mathbb{CP}^n\) case, the quadric bundle \(Z_p\) is equivalent to the Sato map constructed using McKay's osculating complex structure \cite[Sections~6 and~8]{M04}. For each real rank-\(2\) vector space \(V\) obtained as the linear span of a great circle \(S^1\) of a Blaschke manifold of type \(\mathbb{CP}^n\), consider the two almost complex structures \(J_+\) and \(J_-\) given by rotations through \(90\) degrees in the two directions. Choosing one of these and sending \(v\in V\) to \(\sigma(v)=[v-iJ_\pm v]\) gives the Sato map. The fiber \(Q^{a-2}=Q^0=\{[v-iJv],[v+iJv]\}\) of the great quadric fibration consists of the images of these two Sato maps. For the osculating complex structure \(J_\pm\) and the complex structure \(J_{\mathrm{std}}\) of \(\mathbb P(T_pM^{\mathbb C})\), the condition \(J_{\mathrm{std}}\circ d\sigma=d\sigma\circ J_\pm\) is central to McKay's rigidity argument, and is equivalent by definition to \(Z_p\subset\mathbb P(T_pM^{\mathbb C})\) being a holomorphic embedding.

Furthermore, note that Theorem~\ref{hopfiso} makes \(Z_p\) projectively equivalent to the standard VMRT. As mentioned above, there are results showing that the existence of a standard ambient complex space for a Zoll or Blaschke manifold imposes rigidity on the base manifold \cite{BL18,LS26,S25}, while a characterization of the VMRT makes the entire Fano variety standard through Hwang--Mok recognition. Thus, in the presence of a suitable ambient complex space, Theorem~\ref{hopfiso} has considerable potential to yield metric rigidity as well. Indeed, we prove the following.
\begin{thm}\label{main}
Every Blaschke manifold admitting an adapted complex structure on the entire tangent bundle is isometric to a compact rank-one symmetric space.
\end{thm}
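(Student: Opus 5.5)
We reduce to the types not already settled by earlier work. By Bott--Samelson, \((M,g)\) has type \((a,n)\) with \(a\in\{1,2,4,8\}\) or \(n=1\). The types \((a,1)\) and \((1,n)\) are isometric to \(S^a\) and \(\mathbb{RP}^n\) by the results of Berger--Kazdan, Weinstein and Yang cited in the introduction. So the task is the cases \(a=2,4,8\) with \(n\ge 2\). In these cases the goal is to identify the compactified adapted-complex-structure tangent bundle with the standard Fano variety of Table~\ref{amb} and then transfer rigidity back to the metric.

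\textbf{Step 1 (compactification).} Since \(M\) is Blaschke, hence Zoll, an adapted complex structure on all of \(TM\) makes every geodesic complexify to an entire holomorphic curve \(\phi_\gamma\colon\mathbb C/\text{period}\to TM\), that is, a \(\mathbb C^*\). Following the compactification results for Zoll manifolds cited as \cite{BL18,LS26,S25}, we show \(TM\) admits a compactification \(X=TM\sqcup D\) to a projective manifold with smooth ample divisor \(D\). Each \(\phi_\gamma\) then closes up to a rational curve meeting \(D\) transversely at its two ends. The \(S^1\)-action by geodesic flow should extend holomorphically to a \(\mathbb C^*\)-action, making \(X\) a Fano variety.

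\textbf{Step 2 (VMRT equals \(Z_p\)).} Fix \(p\) and an antipodal point \(q\in B_p\). The closures of the complexified geodesics from \(p\) to \(q\) form an \((a-1)\)-sphere family of rational curves through two fixed points. Bend-and-break, run as sketched in the introduction and Section~\ref{bacs}, degenerates them into reducible curves, and these components are minimal rational curves through \(p\). We show their tangent directions are exactly the \(g_{\mathbb C}\)-isotropic lines in \(\mathcal V_q^{\mathbb C}\). Running this over all \(q\in B_p\) identifies the VMRT \(\mathcal C_p\subset\mathbb P(T_pX)=\mathbb P(T_pM^{\mathbb C})\) with \(Z_p\). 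Since a VMRT is an algebraic, hence complex, subvariety, \(Z_p\) is a complex submanifold, and Theorem~\ref{hopfiso} applies. It gives \(Z_p\cong\mathbb P^{n-1}\sqcup\mathbb P^{n-1}\), \(\mathbb P^1\times\mathbb P^{2n-1}\) or \(\mathbb S_5\), projectively equivalently to the standard models.

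\textbf{Step 3 (recognition and metric rigidity).} Hwang--Mok recognition says a Fano manifold of Picard number one whose VMRT at a general point is projectively equivalent to that of a rational homogeneous space \(G/P\) is biholomorphic to it. Applied here, it gives \(X\cong\operatorname{Gr}(2,2n+2)\) or \(E_6/P_1\) when \(a=4,8\). For \(a=2\) the VMRT is disconnected and \(X\) has Picard number two, so we use the product version: two families of lines forces \(X\cong\mathbb P^n\times\mathbb P^n\). We must also check that \(D\) is the standard divisor, which follows since \(D\) is the locus where the complexified geodesics reach infinity, an orbit of the compact real form. Finally, the biholomorphic characterization theorems \cite{LS26,S25}, extended to general \(n\) and the octonionic case by the same argument, show that the anti-holomorphic involution fixing \(M\) and the adapted Kähler potential agree with the standard ones. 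Hence \(g\) is the Fubini--Study-type metric and \(M\) is a CROSS.

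\textbf{Main obstacle.} I expect Step 2, specifically showing that bend-and-break yields exactly the isotropic directions and that these curves are minimal (unsplit), to be the main obstacle. The Picard-number and ampleness facts in Step 1 are the other delicate point. For Step 2 I would first compute intersection numbers: each complexified geodesic has \(D\)-degree \(2\), so its breaking must produce two degree-one curves. Their tangent vector at \(p\) is a limit of \(\dot\phi_\gamma(0)\) rescaled by a complex parameter, and the limit of \(g_{\mathbb C}(\xi,\xi)\) forces isotropy.
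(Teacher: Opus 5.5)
Your overall architecture matches the paper's: compactify to the Fano manifold \(X\), realize \(Z_p\) as a VMRT by degenerating leaves through \(p\) and \(q\in B_p\), apply Theorem~\ref{hopfiso}, then Hwang--Mok recognition, then transfer back to the metric. However, Step~2 has a genuine gap. Degenerating leaves can at best show that each isotropic direction is tangent to \emph{some} minimal rational curve, i.e.\ \(Z_p\subset\bigcup_i\mathcal C_{i,p}\), possibly over several minimal rational components. Nothing in your plan shows that every minimal rational curve through \(p\) arises this way. The step ``a VMRT is algebraic, hence \(Z_p\) is a complex submanifold'' is also invalid: a real-analytic submanifold of a complex variety need not be complex (think of \(\mathbb{RP}^k\subset\mathbb{CP}^k\)). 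The missing idea is a dimension count. Minimal rational curves have \(D\cdot C=1\), so \(-K_X\cdot C=\tfrac{a(n+1)}{2}\). The standard splitting of \(f^*T_X\) for a general minimal rational curve, together with birationality of the tangent map, then gives \(\dim_{\mathbb C}\mathcal C_p=\tfrac{a(n+1)}{2}-2\), exactly half of \(\dim_{\mathbb R}Z_p=a(n+1)-4\). Combined with real-analyticity of \(Z_p\) and connectedness of the total space of \(p\mapsto Z_p\), this places \(Z_p\) inside the VMRT of a single component. By compactness it is then a whole irreducible component of \(\mathcal C_p\), and only then does Theorem~\ref{hopfiso} apply. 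Hwang--Mok needs all of \(\mathcal C_x\) at a general \(x\in X\), so you must still exclude other components. The paper does this by extending the section of the Stein factorization of the universal family along leaves, using the \(\mathcal O(2)^{\oplus a}\) summand of \(T_X|_{\phi_\gamma}\) from Proposition~\ref{fano}(5). You must also pass from general points of the totally real \(M\) to general points of \(X\).

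Even the inclusion needs more than ``run bend-and-break''. The leaves through \(p,q\) form a real \(S^{a-1}\)-family, not an algebraic one. So one takes its Zariski closure in \(\overline M_{0,4}(X,\beta)\), with markings at \(p\), \(q\) and the two points over \(D\). Properness together with Zariski density of \(\mathbb P(V)\) in \(\mathbb P(V^{\mathbb C})\) then reaches every isotropic \([v]\). The identity \(g_{\mathbb C}(f'(0),f'(0))=-4\) in the normalized coordinate forces the limit to be reducible. The constancy of the cross-ratio \(-1\) then rules out two configurations that plain bend-and-break allows: a degree-two component with a contracted tail, or a contracted component carrying \(x_p\) and joining two degree-one curves through \(p\), which gives no tangent direction at \(x_p\). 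Your heuristic ``the limit of \(g_{\mathbb C}(\xi,\xi)\) forces isotropy'' runs the other way: it shows bubbled tangents are isotropic, not that every isotropic direction occurs.

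In Step~3, the claim that \(D\) is standard because it is ``an orbit of the compact real form'' is circular, since no group is known to act on \(X\). The paper instead shows that \(F(D)\) is a smooth hyperplane section and that such sections form a single \(\operatorname{Aut}(X_0)\)-orbit: nondegenerate \(2\)-forms for \(\operatorname{Gr}(2,2n+2)\), and a known single-orbit result for \(E_6/P_1\). It then conjugates \(\tau\) to \(\tau_0\) by Cartan's fixed-point theorem, excluding a residual involution \(k\neq1\) because \(\tau\) has no fixed points on \(D\). Finally, for \(a=2\) the paper simply cites the \(\mathbb{CP}^n\) theorem. The recognition theorem you quote requires Picard number one, so your ``product version'' would need a separate argument, such as Occhetta's characterization.
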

An adapted complex structure is, roughly, a structure in which every geodesic extends to a complex curve in the tangent bundle; see Section~\ref{bacs} for the precise definition. Indeed, the bend-and-break argument mentioned above identifies \(Z_p\) with the actual VMRT, making Theorem~\ref{hopfiso} applicable. This in turn characterizes the ambient complex space through Hwang--Mok recognition, and thereby characterizes the metric of the base manifold.

\noindent \textbf{Acknowledgement.}~The author is deeply grateful to Professor Chi Li for discussions that greatly helped bring this paper to completion. The author was partially supported by the NSF under Award Nos. DMS-2305296 and DMS-2529637.

\noindent \textbf{AI Disclaimer.}~ChatGPT 5.6 sol suggested that the author's original proof of assertion~(1) of Theorem~\ref{hopfiso} could be reformulated in terms of Clifford representations, making the proof more structured and leading to assertion~(3) of the same theorem. ChatGPT 6 Astra suggested using a cross-ratio argument to prove non-contraction in the bend-and-break construction in Proposition~\ref{break}. ChatGPT 6 Astra was used for translation, language editing, and LaTeX typesetting of this paper. All original text was written by the author.

\section{Great quadric fibration}
In this section, we provide the proof of Theorem \ref{hopfiso}. We begin with precise definitions of the relevant bundles.

\begin{defn}\label{rb}
Let \((M,g)\) be a Blaschke manifold of type \((a,n)\) and period \(l\), and let \(p\in M\). Let \(B_p\) be the cut locus of \(p\). Denote by
\[
\underline{E}_p:=B_p\times T_pM
\]
the trivial real vector bundle of rank \(an\) over \(B_p\), and define
\[
\mathcal V_p
:=
\left\{
(q,\xi)\in \underline{E}_p:
\xi=0~\text{or}~\exp_p\left(\frac{\xi}{\|\xi\|_g}\frac{l}{2}\right)=q
\right\}.
\]
Let \(\mathcal W_p:=\mathcal V_p^\perp\), where the orthogonal complement is taken with respect to the metric \(g\) on \(T_pM\). We call the resulting exact sequence of real vector bundles
\[
0\longrightarrow \mathcal V_p
\longrightarrow \underline{E}_p
\longrightarrow \mathcal W_p
\longrightarrow 0
\]
the \emph{real Blaschke sequence at \(p\)}.
\end{defn}

\begin{defn}\label{cb}
Let \((M,g)\) be a Blaschke manifold with real Blaschke sequence
\[
0\longrightarrow \mathcal V_p
\longrightarrow \underline{E}_p
\longrightarrow \mathcal W_p
\longrightarrow 0.
\]
Define its complexification by
\[
0\longrightarrow \mathcal V_p^{\mathbb C}
\longrightarrow \underline{E}_p^{\mathbb C}
\longrightarrow \mathcal W_p^{\mathbb C}
\longrightarrow 0,
\]
where
\(\mathcal V_p^{\mathbb C}:=\mathcal V_p\otimes_{\mathbb R}\mathbb C\),
\(\underline{E}_p^{\mathbb C}:=\underline{E}_p\otimes_{\mathbb R}\mathbb C\), and
\(\mathcal W_p^{\mathbb C}:=\mathcal W_p\otimes_{\mathbb R}\mathbb C\).
We call this the \emph{complex Blaschke sequence at \(p\)}, and refer to the associated projective bundles
\[
\mathbb P(\mathcal V_p^{\mathbb C}),\qquad
\mathbb P(\underline{E}_p^{\mathbb C}),\qquad
\mathbb P(\mathcal W_p^{\mathbb C})
\]
as its projectivization.

Let
\(g_{\mathbb C}\colon
\underline{E}_p^{\mathbb C}\times\underline{E}_p^{\mathbb C}\to\mathbb C\)
be the complex-bilinear extension of the bilinear form induced by \(g\), given by
\[
g_{\mathbb C}(a+ib,v+iw)
=
g(a,v)-g(b,w)
+i\bigl(g(a,w)+g(b,v)\bigr).
\]
The projective isotropic bundle of \(\mathcal V_p\) with respect to \(g_{\mathbb C}\) is defined by
\[
Z_p
:=
\left\{
(q,[\xi])\in\mathbb P(\mathcal V_p^{\mathbb C}):
g_{\mathbb C}(\xi,\xi)=0
\right\},
\qquad
Q^{a-2}\longrightarrow Z_p\xrightarrow{\ \rho\ }B_p.
\]
We call this bundle the \emph{great quadric fibration at \(p\)}, and \(Z_p\) the \emph{great quadric bundle at \(p\)}.
\end{defn}

\noindent\textit{Proof of Theorem \ref{hopfiso}}\quad
The third statement of Theorem~\ref{hopfiso} arises in the proof of the first, and the second follows from the resulting identification. To characterize \(Z_p\) as a projective manifold, we first compute its topology and the structure of its (anti)canonical bundle.

If \(n=1\), all three types reduce to \(S^a\), so assume that \(n\geq2\). The homotopy exact sequence \(\pi_1(S^{an-1})\longrightarrow\pi_1(B_p)\longrightarrow\pi_0(S^{a-1})\) gives \(\pi_1(B_p)=0\), so \(B_p\) is simply connected and orientable. The integral Serre spectral sequence of the great quadric fibration \(Q^{a-2}\longrightarrow Z_p\xrightarrow{\rho}B_p\) therefore degenerates at \(E_2\) for degree reasons, yielding an additive isomorphism \(H^*(Z_p;\mathbb Z)\simeq H^*(B_p;\mathbb Z)\otimes H^*(Q^{a-2};\mathbb Z)\).

The key step is to compute the restriction of the canonical bundle of \(Z_p\) to a fiber \(Q^{a-2}\). For this purpose, we analyze the normal bundle of a fixed fiber. Its underlying real bundle is trivial, namely, \(N^{\mathbb R}_{Q/Z_p}\simeq Q\times T_qB_p\). However, the complex structure on each fiber varies over \(Q\), giving a nontrivial holomorphic structure. We therefore analyze these complex structures along a fixed fiber. Fix \(p\in M\) and \(q\in B_p\), and set \(E:=T_pM\) and \(Z:=Z_p\). Let \(Q\), \(V\), and \(V^{\mathbb C}\) denote the fibers at \(q\) of the great quadric fibration and real, complex Blaschke bundles, respectively. Let \(W:=V^{\perp_g}\subset E\) be the orthogonal complement with respect to \(g\), and let \(W^{\mathbb C}\) be its complexification.

Consider the following diagram of bundle maps:
\[
\begin{tikzcd}
N_{Q/Z} \arrow[r,"d\rho^{1,0}"] \arrow[dr,"\Psi"']
& Q\times T_qB_p^{\mathbb C} \arrow[d,"\operatorname{ev}\circ dG_q^{\mathbb C}"] \\
& \mathcal O_Q(1)\otimes W^{\mathbb C}.
\end{tikzcd}
\]
Here \(N_{Q/Z}\) is the holomorphic normal bundle of \(Q\) in \(Z\), and \(d\rho^{1,0}\) is defined by $d\rho^{1,0}(v^{1,0})=(d\rho(v)-id\rho(Jv))/2$. The map \(dG_q:T_qB_p\to\operatorname{Hom}_{\mathbb C}(V^{\mathbb C},E^{\mathbb C}/V^{\mathbb C})\simeq\operatorname{Hom}_{\mathbb C}(V^{\mathbb C},W^{\mathbb C})\) is the differential at \(q\) of the classifying map \(G:B_p\to\operatorname{Gr}(a,E^{\mathbb C})\) of the complex Blaschke sequence, and \(dG_q^{\mathbb C}\) denotes its complex-linear extension. At a point \([z]\in Q\), the map \(\operatorname{ev}\) is evaluation on the line \(\mathbb Cz\), and \(\mathcal O_Q(1):=\mathcal O_{\mathbb P(E^{\mathbb C})}(1)|_Q\). Finally, \(\Psi\) is the natural map of holomorphic normal bundles given by
\[
\Psi:N_{Q/Z}\hookrightarrow N_{\mathbb P(V^{\mathbb C})/\mathbb P(E^{\mathbb C})}|_Q\simeq\bigl(\mathcal O_{\mathbb P(V^{\mathbb C})}(1)\otimes(E^{\mathbb C}/V^{\mathbb C})\bigr)|_Q\simeq\mathcal O_Q(1)\otimes W^{\mathbb C}.
\]
This diagram commutes by definition. More explicitly, fix \(x=[z]\in Q\), with \(z\in V^{\mathbb C}\), and consider a curve \(X(t)=[Z(t)]\in Z\) with \(Z(0)=z\). Let \(a\in N_{Q/Z,x}\) be the normal class of \(X'(0)\), and set \(b:=Z'(0)\in T_zE^{\mathbb C}\).

Set \(\Phi:=\operatorname{ev}\circ dG_q^{\mathbb C}\circ d\rho^{1,0}\). By definition, the identification of \(dG_q\) with an element of \(\operatorname{Hom}_{\mathbb C}(V^{\mathbb C},E^{\mathbb C}/V^{\mathbb C})\) sends a vector \(z\in V^{\mathbb C}\) to the vector \(b\in T_zE^{\mathbb C}\) induced by the variation of \(V^{\mathbb C}\) in the Grassmannian, modulo motion in the \(V^{\mathbb C}\)-direction. Thus \(\Phi(x,a)=(x,f)\), where \(f:\mathbb Cz\to W^{\mathbb C}\) satisfies \(f(z)=[Z'(0)]=[b]\in E^{\mathbb C}/V^{\mathbb C}\simeq W^{\mathbb C}\). The map \(\Psi\) likewise assigns a vector in \(W^{\mathbb C}\) using the lifted curve \(Z(t)\) and its tangent vector \(b\) at a point of the affine cone over \(Q\). Hence the two constructions agree.

We now prove the third statement of Theorem~\ref{hopfiso}. The key observation is that the preceding correspondence expresses the transport of the complex structure to \(T_qB_p\) in terms of linear maps.

For \(z\in V^{\mathbb C}\), define \(C_z:T_qB_p^{\mathbb C}\to W^{\mathbb C}\) by \(C_z(\xi):=dG_q^{\mathbb C}(\xi)(z)\). By definition, \(z\mapsto C_z\) is linear. Moreover, distinct great spheres in the Blaschke great sphere fibration are disjoint, so for any real $v\in V$, the map \(C_v\), obtained by evaluating the differential of the classifying map \(G\), must be injective for \(v\neq0\). Since its domain and codomain have the same dimension, \(C_v\) is a linear isomorphism.

Let \(x=[\alpha+i\beta]\in Q\), with \(\alpha,\beta\in V\), and let \(\xi\in T_qB_p\). Denote by \(J_x\) the complex structure transported to \(T_qB_p\) through the preceding diagram. The holomorphicity of \(\Psi\) gives
\[
\begin{aligned}
0&=dG_q^{\mathbb C}(\xi+iJ_x\xi)(\alpha+i\beta)=C_{\alpha+i\beta}(\xi+iJ_x\xi)\\
&=C_\alpha\xi-C_\beta J_x\xi+i(C_\beta\xi+C_\alpha J_x\xi).
\end{aligned}
\]
Thus \(C_\alpha-C_\beta J_x=0\) and \(C_\beta+C_\alpha J_x=0\). Since \(J_x\) is a complex structure, we obtain the following key identities:
\begin{equation}\label{J}
J_x=C_\beta^{-1}C_\alpha=-C_\alpha^{-1}C_\beta,\qquad J_x^2=-\operatorname{id}.
\end{equation}
Fix a real unit vector \(e\in V\), and define \(L_z:=C_e^{-1}C_z\in\operatorname{End}_{\mathbb C}(T_qB_p^{\mathbb C})\). Set \(e_0:=e\), and define \(c_q:\operatorname{Cl}^0(V,g_p)\to\operatorname{End}_{\mathbb R}(T_qB_p)\) by \(c_q(e_0z):=-L_z\) for \(z\in V\).

Equation~\eqref{J} recovers the Clifford relations as follows. For an orthonormal basis \(e_0,e_1,\ldots,e_{a-1}\) of \(V\), write \(C_i:=C_{e_i}\) and \(L_i:=L_{e_i}\). For distinct \(i,j\in\{1,\ldots,a-1\}\), we have
\[
\begin{aligned}
L_iL_j&=C_0^{-1}C_iC_0^{-1}C_j=-C_i^{-1}C_0C_0^{-1}C_j=-C_i^{-1}C_j=C_j^{-1}C_i,\\
L_jL_i&=C_0^{-1}C_jC_0^{-1}C_i=-C_j^{-1}C_0C_0^{-1}C_i=-C_j^{-1}C_i.
\end{aligned}
\]
Hence \(L_iL_j=-L_jL_i\), and the universal property yields a Clifford representation \(c_q\). Moreover, for orthonormal $\alpha,\beta\in V$, the Clifford identity $\alpha\beta=-(e_0\alpha)^{-1}(e_0\beta)$ and the definition $c_q(e_0z)=-L_z$ give
\[
c_q(\alpha\beta)
=-L_\alpha^{-1}L_\beta
=-C_\alpha^{-1}C_\beta
=J_{[\alpha+i\beta]}.
\]
This Clifford representation strongly constrains the form of \(N_{Q/Z}\). Note that the image bundle of \(\Psi\) is isomorphic to \(N_{Q/Z}\). To examine this image more explicitly, define the composite map
\[
\begin{aligned}
\mu:\mathcal O_Q(-1)\otimes T_qB_p^{\mathbb C}&\longrightarrow Q\times W^{\mathbb C}\longrightarrow Q\times T_qB_p^{\mathbb C},\\
z\otimes\xi&\longmapsto([z],C_z(\xi))\longmapsto([z],L_z(\xi)).
\end{aligned}
\]
By the definition of the first map and the fact that \(C_0\) in \(L_z=C_0^{-1}C_z\) is a linear isomorphism, we obtain \(\mathcal O_Q(1)\otimes\operatorname{Im}(\mu)\simeq N_{Q/Z}\). The variation of \(L_z(\xi)\) with \(z\) is the restriction of the Clifford action associated with \(V^{\mathbb C}\) to isotropic vectors, which makes this map highly rigid.

We now prove the first statement of Theorem~\ref{hopfiso} for \(a=2,4,8\).

\medskip
\noindent\textbf{1. \((a,n)=(4,n)\), \(\mathbb K=\mathbb H\).}

In this case, the complexified Clifford algebra is \((\mathbb H\oplus\mathbb H)\otimes_{\mathbb R}\mathbb C\simeq M_2(\mathbb C)\oplus M_2(\mathbb C)\), and the corresponding module is \(\mathbb H^{\mathbb C}\simeq M_2(\mathbb C)\). More explicitly, choose an orthonormal basis \(e_0=e,e_1,e_2,e_3\) of \(V\), and set \(I=\left(\begin{smallmatrix}i&0\\0&-i\end{smallmatrix}\right)\), \(J=\left(\begin{smallmatrix}0&1\\-1&0\end{smallmatrix}\right)\), and \(K=\left(\begin{smallmatrix}0&i\\i&0\end{smallmatrix}\right)\). Identify \(T_qB_p^{\mathbb C}\simeq M_2(\mathbb C)^{\oplus(n-1)}\) so that \(L_{e_1}\) and \(L_{e_2}\) act on each summand by left multiplication by \(I\) and \(J\), respectively. Let \(k_\pm\) be the numbers of summands on which \(L_{e_3}\) acts by left multiplication by \(\pm K\). Thus \(k_++k_-=n-1\), and \(T_qB_p^{\mathbb C}\simeq M_2(\mathbb C)^{\oplus k_+}\oplus M_2(\mathbb C)^{\oplus k_-}\).

Under the identification of \(V^{\mathbb C}\) with \(M_2(\mathbb C)\) given by the basis \(\operatorname{id},I,J,K\), we have \(g_{\mathbb C}(z,z)=\det(z)\). Thus nonzero isotropic vectors in \(V^{\mathbb C}\) correspond to rank-one complex matrices \(A=vw^{\mathsf t}\), with \(v,w\in\mathbb C^2\). On each summand \(M_2(\mathbb C)\) of \(T_qB_p^{\mathbb C}\), the line bundles arising from the action of \(A=vw^{\mathsf t}\) depend only on \(v\) or only on \(w\). Each of the two columns of \(M_2(\mathbb C)\) contributes one line bundle, so \(\operatorname{Im}(\mu)\simeq\mathcal O(-1,0)^{\oplus2k_+}\oplus\mathcal O(0,-1)^{\oplus2k_-}\) and \(N_{Q/Z}\simeq\mathcal O(1,0)^{\oplus2k_-}\oplus\mathcal O(0,1)^{\oplus2k_+}\). In particular, \(\det N_{Q/Z}\simeq\mathcal O(2k_-,2k_+)\). Since \(-K_Q\simeq\mathcal O(2,2)\), adjunction gives
\[
(-K_Z)|_Q\simeq(-K_Q)\otimes\det N_{Q/Z}\simeq\mathcal O(2k_-+2,2k_++2).
\]
We now use the following characterization theorem of Occhetta \cite[Theorem~1.1]{O06}.
\begin{thm}[Occhetta]
Let \(X\) be a smooth complex projective variety of dimension \(N\). Then \(X\simeq\mathbb P^{n_1}\times\cdots\times\mathbb P^{n_k}\) if and only if there exist \(k\) unsplit covering families of rational curves \(\mathcal V_1,\ldots,\mathcal V_k\) such that \(-K_X\cdot\mathcal V_i=n_i+1\) and \(n_1+\cdots+n_k=N\), and the numerical classes \([\mathcal V_1],\ldots,[\mathcal V_k]\in N_1(X)\) are linearly independent.
\end{thm}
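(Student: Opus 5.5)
Since this is a cited result, I would not reprove it in full, but the proof I would reconstruct runs as follows. The ``only if'' direction is immediate. On \(\mathbb P^{n_1}\times\cdots\times\mathbb P^{n_k}\), let \(\mathcal V_i\) be the family of lines in the \(i\)-th factor, with the other coordinates fixed. Each \(\mathcal V_i\) is unsplit, since lines have minimal degree. Each is covering, and satisfies \(-K_X\cdot\mathcal V_i=n_i+1\) because \(-K_X=\mathcal O(n_1+1,\ldots,n_k+1)\). The classes \([\mathcal V_i]\) are dual to the independent hyperplane classes of the factors, so they are linearly independent.

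For the converse, I would first establish a dimension count. Fix a general point \(x\). For an unsplit covering family \(\mathcal V_i\), the standard deformation-theoretic bound gives \(\dim\operatorname{Locus}(\mathcal V_i)_x\ge -K_X\cdot\mathcal V_i-1=n_i\). Next I would form the iterated chain loci \(\operatorname{Locus}(\mathcal V_1,\ldots,\mathcal V_k)_x\). Here the key input is the Ionescu--Wi\'sniewski type lemma: every curve in \(\operatorname{Locus}(\mathcal V_{i_1},\ldots,\mathcal V_{i_j})_x\) is numerically a combination of \([\mathcal V_{i_1}],\ldots,[\mathcal V_{i_j}]\). Linear independence of the classes then forces each newly added family to meet the previous locus in finitely many points. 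Consequently the dimensions add: \(\dim\operatorname{Locus}(\mathcal V_1,\ldots,\mathcal V_k)_x\ge n_1+\cdots+n_k=N\). Since the ambient dimension is \(N\), all the inequalities are equalities. In particular \(\dim\operatorname{Locus}(\mathcal V_i)_x=n_i\) exactly, and every point of \(X\) is reached by a chain.

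Next I would identify the pieces. Equality in the bound means that through a general point, \(\mathcal V_i\) has exactly the expected \((n_i-1)\)-dimensional family of curves. By the Cho--Miyaoka--Shepherd-Barron/Kebekus characterization (a variety carrying an unsplit covering family of degree \(\dim+1\) is projective space), the closure of a \(\mathcal V_i\)-equivalence class through a general point is \(\mathbb P^{n_i}\) with \(\mathcal V_i\) its lines.

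The main obstacle is globalizing this. One must show that the rationally-chain-connected fibration for \(\mathcal V_1\) is an equidimensional morphism, indeed a smooth \(\mathbb P^{n_1}\)-bundle onto a smooth base \(Y\), rather than merely a rational map. I would try to do this by showing that \([\mathcal V_1]\) spans an extremal ray. Independence together with the locus computation shows that \(\mathbb R_{\ge0}[\mathcal V_1]\) is \(K_X\)-negative and extremal. The contraction of this ray then has every fibre of dimension exactly \(n_1\), by the equality above, and Fujita-type results give a projective bundle over a smooth \(Y\). The images of \(\mathcal V_2,\ldots,\mathcal V_k\) in \(Y\) remain unsplit, covering, and independent, and have the right anticanonical degrees, so by induction \(Y\simeq\mathbb P^{n_2}\times\cdots\times\mathbb P^{n_k}\). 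Finally, the existence of the other families gives sections along which the bundle trivializes, so that the bundle is \(\mathbb P(\mathcal O^{\oplus n_1+1})\) and hence \(X\) is the product.
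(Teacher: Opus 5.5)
The paper gives no proof of this statement. It is quoted from Occhetta and used only as a black box in the case \(a=4\), so there is no route in the paper to compare with, and your sketch has to stand on its own.

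\textbf{What is fine.} The ``only if'' direction is correct. The first half of the converse is the standard opening and is also fine. The chain-locus estimate for unsplit families with independent classes forces \(X=\operatorname{Locus}(\mathcal V_1,\ldots,\mathcal V_k)_x\). The Andreatta--Wi\'sniewski lemma, which you need in its form with \emph{nonnegative} coefficients, then gives \(\overline{NE}(X)=\langle[\mathcal V_1],\ldots,[\mathcal V_k]\rangle\). Hence \(\rho_X=k\), \(X\) is Fano, and each \([\mathcal V_i]\) spans an extremal ray.

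\textbf{Two smaller problems in the next step.} First, ``every fibre of the contraction \(\varphi_1\) has dimension exactly \(n_1\), by the equality above'' does not follow. That equality holds only at a general point, and special fibres can jump. You must rerun the estimate from an arbitrary fibre \(F\): since every curve in \(F\) has class in \(\mathbb R_{\ge0}[\mathcal V_1]\), you get \(\dim F+\sum_{j\ge2}n_j\le\dim\operatorname{Locus}(\mathcal V_2,\ldots,\mathcal V_k)_F\le N\). Second, ``Fujita-type results'' must be replaced by a precise criterion. Fujita's lemma, for example, needs a line bundle of degree one on the \(\mathcal V_1\)-lines, which you do not have a priori.

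\textbf{The genuine gap.} It lies in the inductive step, and with it your last sentence. The claim that the images of \(\mathcal V_2,\ldots,\mathcal V_k\) on \(Y\) are unsplit of \(-K_Y\)-degree \(n_j+1\) is not a formality. We have \(-K_X\cdot C=-K_Y\cdot\varphi_{1*}C-K_{X/Y}\cdot C\), and nothing in your sketch controls the relative term, nor non-reduced limits of the image family.

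In fact the relative term has a sign. Let \(G\) be a general fibre of the contraction \(\psi\colon X\to W\) of the face \(\langle[\mathcal V_2],\ldots,[\mathcal V_k]\rangle\). Then:
\begin{enumerate}
\item \(G\to Y\) is finite and surjective;
\item \(N_{G/X}\) is trivial;
\item \(K_{X/Y}|_G\) is the ramification divisor \(R\) of \(G\to Y\).
\end{enumerate}
Consequently, for \(\mathcal V_j\)-curves \(C\subset G\) we get \(-K_Y\cdot\varphi_{1*}C=n_j+1+R\cdot C\ge n_j+1\). Equality holds for all \(j\ge2\) if and only if \(R=0\), that is (since \(Y\) is simply connected) if and only if \(G\) is a section of \(\varphi_1\).

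That statement is essentially the splitting \(X\simeq Y\times W\) you are trying to prove. It is also exactly what ``the other families give sections along which the bundle trivializes'' presupposes. Since \(\mathbb P^{n_1}\)-bundles over \(\prod_{j\ge2}\mathbb P^{n_j}\) need not be trivial, it has to be extracted from the numerical hypotheses by an actual argument.

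\textbf{How it can be closed for \(k=2\).} Here \(Y\simeq\mathbb P^{n_2}\) and \(W\simeq\mathbb P^{n_1}\) by Lazarsfeld, and \(X=\mathbb P(E)\). Let \(H,D\) be the pulled-back hyperplane classes and write \(H|_G\simeq\mathcal O_G(c)\). Computing \((-K_X)\cdot H^{n_2-1}\cdot D^{n_1}\) once on \(\mathbb P(E)\) (using \(D^{n_1+1}=0\)) and once on a fibre of \(\psi\) gives \(c^{n_2}=c^{n_2-1}\). This forces \(\deg(G\to Y)=c^{n_2}=1\).

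Your sketch contains no such step, so as it stands the converse is not proved.
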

In our situation, the two rulings of \(Q^2\simeq\mathbb P^1\times\mathbb P^1\) can be globally distinguished because \(B_p\) is simply connected. By taking curves in one ruling or the other in every fiber, we obtain two families of rational curves on \(Z\), say \(\mathcal V_1,\mathcal V_2\). They cover \(Z\), and they are unsplit because each family is parametrized by a compact family of lines in the fibers.

If \(C_1\) and \(C_2\) denote curves in the two rulings, then the above adjunction computation gives \(-K_Z\cdot C_1=2k_-+2\) and \(-K_Z\cdot C_2=2k_++2\). Moreover, the Serre spectral sequence shows that \([C_1]\) and \([C_2]\) are linearly independent in \(H_2(Z,\mathbb Q)\). Since \(Z\) is projective and \(b_2(Z)=2\), Hodge decomposition gives \(H^{2,0}(Z)=0\), so \(H^2(Z,\mathbb Q)\) is generated by divisor classes by the Lefschetz \((1,1)\)-theorem. Thus \([C_1]\) and \([C_2]\) are also numerically independent. Therefore Occhetta's theorem gives \(Z\simeq\mathbb P^{2k_-+1}\times\mathbb P^{2k_++1}\).

Finally, the additive cohomology of \(Z\), already determined from the fibration \(Q^2\longrightarrow Z\longrightarrow B_p\), agrees with that of the canonical VMRT. Among the above products of two projective spaces, the only possibility is
\[
Z\simeq\mathbb P^1\times\mathbb P^{2n-1}.
\]

\medskip
\noindent\textbf{2. \((a,n)=(8,2)\), \(\mathbb K=\mathbb O\).}

In this case, the representation is equivalent to the action of \(\operatorname{Cl}^0(V,g_p)\simeq\operatorname{Cl}_{0,7}\) on \(T_qB_p\simeq\mathbb O\). Thus, if \(\Delta_\pm\) are the complexifications of the irreducible real half-spin modules \(S_\pm\), then, up to fixed linear isomorphisms, \(\mu\) is induced by Clifford multiplication \(V^{\mathbb C}\otimes\Delta_\pm\to\Delta_\mp\). Moreover, by the holomorphicity of \(\mu\), the \((0,1)\)-part of \(T_qB_p^{\mathbb C}\) is annihilated, so \(\operatorname{Im}(\mu)\) is a holomorphic bundle of rank \(4\). Hence \(\operatorname{Im}(\mu)\) is one of Ottaviani's two rank-\(4\) spinor bundles \(\mathcal S_\pm\) on \(Q^6\) \cite{O88}. By \cite[Remark~2.9]{O88}, \(\det(\mathcal S_\pm)\simeq\mathcal O_Q(-2)\). Consequently, \(\det N_{Q/Z}\simeq\mathcal O_Q(1)^{\otimes4}\otimes\det(\mathcal S_\pm)\simeq\mathcal O_Q(2)\), and hence \(c_1(N_{Q/Z})=2H\), where \(H:=c_1(\mathcal O_Q(1))\). Since \(c_1(-K_Q)=6H\) for \(Q=Q^6\), adjunction gives
\[
c_1((-K_Z)|_Q)=c_1(-K_Q)+c_1(N_{Q/Z})=8H.
\]
Moreover, \(\dim_{\mathbb C}Z=\dim_{\mathbb C}Q+\operatorname{rank}_{\mathbb C}N_{Q/Z}=6+4=10\). The great quadric fibration \(Q^6\longrightarrow Z\longrightarrow B_p\) identifies the second cohomology groups of \(Q^6\) and \(Z\), so \(Z\subset\mathbb P(T_pM^{\mathbb C})\simeq\mathbb P^{15}\) is a \(10\)-dimensional Fano manifold of Picard number \(1\) and index \(8\), hence of coindex \(3\).

Each fiber \(Q^6\) is the projectivization of the isotropic vectors in a fiber of the complex Blaschke bundle \(\mathcal V_p^{\mathbb C}\subset B_p\times T_pM^{\mathbb C}\), and these isotropic vectors span that fiber. As the base point varies, the real Blaschke bundle sweeps out all directions in \(T_pM\), so \(Z\) spans \(\mathbb P^{15}\). Thus the restriction map \(H^0(\mathbb P^{15},\mathcal O(1))\to H^0(Z,\mathcal O_Z(1))\) is injective, and \(h^0(Z,\mathcal O_Z(1))\geq16\). The Mukai genus is therefore \(g=h^0(Z,\mathcal O_Z(1))-\dim_{\mathbb C}Z+1\geq7\). By Mukai's classification of Fano manifolds of coindex \(3\) \cite{M89,W21}, \(Z\subset\mathbb P^{15}\) is projectively equivalent to \(\mathbb S_5\).

\medskip
\noindent\textbf{3. \((a,n)=(2,n)\), \(\mathbb K=\mathbb C\).}

In this case, \(Q^{a-2}=Q^0=\{[v^{1,0}],[v^{0,1}]\}\) is a two-point set, so the first statement of Theorem~\ref{hopfiso} admits a more straightforward proof by the following degree calculation than by an analysis of \(N_{Q/Z}\) analogous to the cases \(a=4,8\). Since the base \(B_p\) of the great quadric fibration satisfies \(\pi_1(B_p)=0\), we have \(Z_p=Z_+\sqcup Z_-\), and each \(Z_\pm\) is diffeomorphic to \(B_p\) via the bundle projection \(\rho\). Set \(L:=\mathcal O_{\mathbb P(T_pM^{\mathbb C})}(-1)|_{Z_+}\). Then the sphere bundle \(S(L)\) is equivalent to the great sphere fibration as follows. For a unit vector \(v\in(\mathcal V_p)_q\subset T_pM\), let \(J\) be the orthogonal complex structure on \((\mathcal V_p)_q\) for which \([v-iJv]\in Z_+\) (note that $\dim_\bR(\mathcal V_p)_q=2$). The following correspondence follows directly from the definitions:
\[
\begin{tikzcd}
S(L) \arrow[r] \arrow[d,"\simeq"']
& Z_+ \arrow[d,"\rho"] \\
S^{2n-1} \arrow[r,"\rho"']
& B_p
\end{tikzcd}
\qquad
\begin{tikzcd}
\frac{1}{\sqrt{2}}(v-iJv) \arrow[r,mapsto] \arrow[d,mapsto]
& {[v-iJv]} \arrow[d,mapsto] \\
v \arrow[r,mapsto]
& q.
\end{tikzcd}
\]
Let \(H:=c_1(L^*)=-c_1(L)\), and let \(e\) be the Euler class of \(S(L)\). Then \(e=\pm H\), depending on the sign convention. The Gysin sequence of the great sphere fibration \(S^1\longrightarrow S^{2n-1}\longrightarrow B_p\) therefore gives \(\left|\int_{Z_+}e^{n-1}\right|=\left|\int_{Z_+}H^{n-1}\right|=1\). Since \(H\) is ample, \(\int_{Z_+}H^{n-1}=1\). Thus \(Z_+\) is a projective subvariety of degree \(1\) in \(\mathbb P(T_pM^{\mathbb C})\), hence a linear subspace. The same argument applies to \(Z_-\). Since \(\dim_{\mathbb R}B_p=2n-2\), we have \(\dim_{\mathbb C}Z_\pm=n-1\). These are disjoint linear subspaces of \(\mathbb P^{2n-1}\), so their dimensions imply that \(Z_p=Z_+\sqcup Z_-\subset\mathbb P(T_pM^{\mathbb C})\) is projectively equivalent to \(\mathbb P^{n-1}\sqcup\mathbb P^{n-1}\subset\mathbb P^{2n-1}\).

Finally, we prove the second statement of Theorem~\ref{hopfiso}. Let \(Z_0\subset\mathbb P((\mathbb K^n)^{\mathbb C})\) be the great quadric bundle of the standard model. The main step is to construct a linear isometry \(\phi:(T_pM,g_p)\to\mathbb K^n\) whose complex-linear extension satisfies \(\mathbb P(\phi^{\mathbb C})(Z)=Z_0\); the remaining assertions then follow.

Indeed, suppose that such a \(\phi\) exists. Two orthonormal real vectors \(a,b\in T_pM\) belong to the same fiber of the real Blaschke bundle \(\mathcal V_p\) if and only if \([a+ib]\in Z\). Since \(\phi\) preserves the metric and identifies \(Z\) with \(Z_0\), it follows that \(\rho(a)=\rho(b)\) if and only if \(\pi(\phi(a))=\pi(\phi(b))\). The same equivalence holds for arbitrary real unit vectors: for a linearly independent pair, replace the second vector by its normalized component orthogonal to the first, while a linearly dependent pair belongs to the same fiber on both sides. We may therefore define \(\varphi:B_p\to\mathbb KP^{n-1}\) by \(\varphi(q):=\pi(\phi(v))\) for any unit vector \(v\in(\mathcal V_p)_q\). This is well-defined and bijective. Since the bundle projections are smooth submersions, the identities \(\varphi\circ\rho=\pi\circ\phi\) and \(\varphi^{-1}\circ\pi=\rho\circ\phi^{-1}\) show that \(\varphi\) and its inverse are smooth. Thus \(\varphi\) is a diffeomorphism. Setting \(\widetilde\phi(q,v):=(\varphi(q),\phi(v))\), we obtain the first commutative diagram in the statement, and the second follows by restriction to unit vectors.

We now construct \(\phi:(T_pM,g_p)\to\mathbb K^n\) in the following steps. By the first statement of Theorem~\ref{hopfiso}, there exists a complex-linear isomorphism \(f:T_pM^{\mathbb C}\to(\mathbb K^n)^{\mathbb C}\) whose projectivization satisfies \(\mathbb P(f)(Z)=Z_0\). We first prove the following lemma.
\begin{lem}
Set \(I_2:=\{b\in\operatorname{Sym}^2(((\mathbb K^n)^{\mathbb C})^*):b(z,z)=0\text{ for every }[z]\in Z_0\}\). The complex-linear automorphisms of \((\mathbb K^n)^{\mathbb C}\) whose projectivizations preserve \(Z_0\) act transitively on the nondegenerate forms in \(I_2\).
\end{lem}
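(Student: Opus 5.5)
The plan is to compute \(I_2\) explicitly in each of the three standard models of Table~\ref{qb}. In each case \(I_2\) turns out to be a small representation of the obvious reductive subgroup \(G\) of the stabilizer of \(Z_0\), and the nondegenerate forms turn out to be exactly the open \(G\)-orbit. Throughout, write \(E_0:=(\mathbb K^n)^{\mathbb C}\), and view a quadric \(b\in\operatorname{Sym}^2E_0^*\) as a symmetric bilinear form on \(E_0\).

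\emph{Case \(a=2\).} Here \(Z_0=\mathbb P(U_+)\sqcup\mathbb P(U_-)\) with \(E_0=U_+\oplus U_-\) and \(\dim U_\pm=n\). A form \(b\) vanishes on \(Z_0\) if and only if \(b|_{U_+}=0\) and \(b|_{U_-}=0\). Then \(b\) is determined by its off-diagonal pairing \(\beta:U_+\times U_-\to\mathbb C\), so \(I_2\simeq\operatorname{Hom}(U_+,U_-^*)\). In block form \(b=\left(\begin{smallmatrix}0&\beta\\ \beta^{\mathsf t}&0\end{smallmatrix}\right)\), so \(b\) is nondegenerate if and only if \(\beta\) is an isomorphism. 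The subgroup \(GL(U_+)\times GL(U_-)\) preserves \(Z_0\) and acts on \(\beta\) by \(\beta\mapsto B^{-\mathsf t}\beta A^{-1}\). This action is transitive on isomorphisms \(U_+\to U_-^*\).

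\emph{Case \(a=4\).} Here \(Z_0\) is the Segre variety \(\mathbb P^1\times\mathbb P^{2n-1}\subset\mathbb P(A\otimes B)\) with \(\dim A=2\) and \(\dim B=2n\). The decomposition
\[
\operatorname{Sym}^2(A\otimes B)^*=\operatorname{Sym}^2A^*\otimes\operatorname{Sym}^2B^*\oplus\Lambda^2A^*\otimes\Lambda^2B^*
\]
holds, and the quadrics through the Segre variety are exactly the second summand, spanned by the \(2\times2\) minors. Hence \(I_2\simeq\Lambda^2B^*\), with \(b=\omega_A\otimes\omega\), where \(\omega_A\) spans \(\Lambda^2A^*\). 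Such a \(b\) is nondegenerate on \(A\otimes B\) if and only if \(\omega\) is nondegenerate, that is, symplectic. The subgroup \(GL(A)\times GL(B)\) preserves \(Z_0\), and \(GL(B)=GL_{2n}\) acts transitively on symplectic forms. This settles the case.

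\emph{Case \(a=8\), \(n=2\).} Here \(Z_0=\mathbb S_5\subset\mathbb P(\Delta_+)\simeq\mathbb P^{15}\) is the spinor variety of \(\operatorname{Spin}(10)\). I would use the classical fact that its ideal is generated by \(10\) quadrics forming the vector representation. Thus \(I_2\simeq V_{10}\), where a vector \(v\in V_{10}\) gives the form \(b_v(s,t)=\langle s,v\cdot t\rangle\). Here \(v\cdot{}\) denotes Clifford multiplication \(\Delta_+\to\Delta_-\), and \(\langle\,,\rangle\) is the invariant pairing \(\Delta_+\otimes\Delta_-\to\mathbb C\). The group \(\mathbb C^*\times\operatorname{Spin}(10)\) preserves \(Z_0\) and acts on \(V_{10}\) through \(\mathbb C^*\times SO(10)\). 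It therefore has three orbits on \(V_{10}\): zero, the nonzero isotropic vectors, and the non-isotropic vectors. The last set is a single orbit because scalars can rescale \(q(v)\).

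It remains to match nondegeneracy with non-isotropy, which I expect to be the main obstacle. If \(q(v)\neq0\), then \(v\cdot v=-q(v)\) is invertible, so \(v\cdot{}:\Delta_+\to\Delta_-\) is an isomorphism. Since \(\langle\,,\rangle\) is perfect, \(b_v\) is then nondegenerate. If instead \(v\neq0\) is isotropic, then \(v\cdot v=0\), so the composite \(\Delta_+\to\Delta_-\to\Delta_+\) of Clifford multiplications by \(v\) vanishes. This forces \(v\cdot{}:\Delta_+\to\Delta_-\) to have a nontrivial kernel (in fact of dimension \(8\)), so \(b_v\) is degenerate. Hence the nondegenerate forms in \(I_2\) are exactly the open orbit.

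The care needed is in justifying the identification of \(I_2\) with \(V_{10}\) through the bilinear form \(b_v\), including the symmetry of \(b_v\) in dimension \(10\). For this I would cite the standard description of the spinor variety as the set of pure spinors, cut out by the vanishing of the vector-valued quadric \(s\mapsto(s,\Gamma s)\).
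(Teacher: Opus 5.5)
Your cases \(a=2\) and \(a=4\) are the paper's argument: the invertible off-diagonal block, and \(I_2=\bigwedge^2A^*\otimes\bigwedge^2B^*\) with \(\operatorname{GL}(B)\) transitive on symplectic forms. For \(a=8\) you share the paper's starting point, namely that \(I_2\) is the vector representation of \(\operatorname{Spin}(10,\mathbb C)\) and that its non-isotropic vectors form one orbit up to scalars. You differ in how you show that nondegenerate forms are non-isotropic.

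The paper argues abstractly. The degeneracy locus \(\mathcal D\subset\mathbb P(I_2)\simeq\mathbb P^9\) is closed and invariant, and it misses the complexified standard metric \(h\). It is nonempty because \(\det\) restricted to a line through \([h]\) is a nonzero binary form of degree \(16\). Since the only orbits are the quadric \(\mathcal Q\) and its complement, \(\mathcal D=\mathcal Q\). This needs only the abstract identification of \(I_2\) and one known nondegenerate member.

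You instead realize \(I_2\) as \(\{b_v\}\) with \(b_v(s,t)=\langle s,v\cdot t\rangle\) and compute directly. This yields more: both implications, the kernel dimension \(8\), and the existence of nondegenerate members. The cost is the explicit spinor facts you list, namely the symmetry of \(b_v\) in dimension \(10\) and the perfect pairing \(\Delta_+\otimes\Delta_-\to\mathbb C\).

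One step needs another line. The identity \(v\cdot v=0\) alone does not force \(v\cdot|_{\Delta_+}\) to have a kernel. If it were injective, \(v\cdot\) would vanish on \(\Delta_-=v\cdot\Delta_+\). That is impossible, since \(V_{10}\to\operatorname{Hom}(\Delta_-,\Delta_+)\) is a nonzero equivariant map out of an irreducible module and hence injective. Alternatively, \(\det b_v\) is an \(\operatorname{SO}(10)\)-invariant of degree \(16\), hence a multiple of \(q(v)^8\), which settles this direction at once.

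Finally, only the implication ``nondegenerate \(\Rightarrow\) non-isotropic'' is needed for transitivity. Your converse direction is a bonus, not a requirement.
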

\begin{proof}
For \(\mathbb K=\mathbb C\), we have \(Z_0=\mathbb P(A_+)\sqcup\mathbb P(A_-)\), where \(\dim_{\mathbb C}A_\pm=n\). In this case, \((\mathbb C^n)^{\mathbb C}=A_+\oplus A_-\), where the left-hand side denotes the complexification of the \emph{real} vector space \(\mathbb C^n\). Thus \(b\in I_2\) has block matrix \(\left(\begin{smallmatrix}0&C\\C^{\mathsf t}&0\end{smallmatrix}\right)\), and \(b\) is nondegenerate precisely when \(C\) is invertible. Changes of basis in the two summands act transitively on such matrices \(C\) and preserve \(Z_0\).

For \(\mathbb K=\mathbb H\), we have \(Z_0=\mathbb P(A)\times\mathbb P(W)\), where \(\dim_{\mathbb C}A=2\) and \(\dim_{\mathbb C}W=2n\). Here \(I_2=\bigwedge^2A^*\otimes\bigwedge^2W^*\). Since \(\dim_{\mathbb C}A=2\), fixing a nonzero \(\omega_A\in\bigwedge^2A^*\) gives a unique expression \(b=\omega_A\otimes\omega_W\) for every \(b\in I_2\). The form \(b\) is nondegenerate if and only if \(\omega_W\) is nondegenerate, so the nondegenerate forms in \(I_2\) constitute a single orbit under \(\operatorname{id}_A\otimes\operatorname{GL}(W)\).

For \(\mathbb K=\mathbb O\), we have \(Z_0=\mathbb S_5\). Let \(\rho_{\mathrm{spin}}:\operatorname{Spin}(10,\mathbb C)\to\operatorname{GL}((\mathbb O^2)^{\mathbb C})\simeq\operatorname{GL}(16,\mathbb C)\) be the half-spin representation. The action on \(I_2\) defined by \((s\cdot b)(z,z):=b(\rho_{\mathrm{spin}}(s)^{-1}z,\rho_{\mathrm{spin}}(s)^{-1}z)\) is isomorphic to the standard representation of \(\operatorname{Spin}(10,\mathbb C)\) \cite[Corollary~4.6 and Lemma~6.20]{K18}. Let \(Q\) denote the invariant quadratic form of this standard representation. We claim that \(Q(b)=0\) implies that \(b\) is degenerate.

Indeed, \(\operatorname{Spin}(10,\mathbb C)\) has two orbits on \(\mathbb P(I_2)\simeq\mathbb P^9\), namely, \(\mathcal Q:=\{[b]:Q(b)=0\}\) and its complement. Set \(\mathcal D:=\{[b]\in\mathbb P(I_2):\det b=0\}\). We show that \(\mathcal D=\mathcal Q\). First, \(\mathcal D\) is closed, since it is defined by the determinant, and is \(\operatorname{Spin}(10,\mathbb C)\)-invariant, since the action preserves degeneracy. Moreover, \(\mathcal D\) is proper and nonempty. The complexification \(h\) of the standard metric gives a nondegenerate form with \([h]\in\mathbb P(I_2)\setminus\mathcal D\). Restricting the determinant to a projective line through \([h]\) gives a nonzero homogeneous polynomial of degree \(16\), which has a zero. Hence \(\mathcal D\neq\varnothing\). It follows that \(\mathcal D\) is precisely the closed orbit \(\mathcal Q\).

Consequently, every nondegenerate \(b\in I_2\) satisfies \(Q(b)\neq0\). The standard action of \(\operatorname{Spin}(10,\mathbb C)\) is transitive on vectors with the same nonzero value of \(Q\). Thus any two nondegenerate forms \(b,b'\in I_2\) are related by a combination of this action and scalar multiplication. Scalar multiplication of forms is induced by scalar automorphisms of \((\mathbb O^2)^{\mathbb C}\), completing the proof.
\end{proof}
It follows that there exists a complex-linear automorphism \(A\in\operatorname{GL}((\mathbb K^n)^{\mathbb C})\) preserving \(Z_0\) such that \(F:=Af:(T_pM^{\mathbb C},g_{\mathbb C,p})\to((\mathbb K^n)^{\mathbb C},h)\) is an isometry and \(\mathbb P(F)(Z)=Z_0\). We finally modify \(F\) to obtain a real isometry.

Choose real orthonormal bases in the domain and codomain, so that the matrix of \(F\) satisfies \(F^{\mathsf t}F=I\). Take its polar decomposition \(F=UP\). Since \(F^{\mathsf t}F=I\), we have \(\overline F=(F^*)^{-1}=UP^{-1}\). On the other hand, conjugating \(F=UP\) gives \(\overline F=\overline U\,\overline P\). Uniqueness of the polar decomposition therefore implies \(\overline U=U\) and \(\overline P=P^{-1}\). In particular, \(U\) is real orthogonal. Since both \(Z\) and \(Z_0\) are invariant under conjugation, suppressing projectivizations in the notation, we obtain
\[
F(Z)=\overline F(Z)=Z_0,\qquad UP(Z)=UP^{-1}(Z),\qquad P^2(Z)=Z.
\]
We now apply the following lemma.
\begin{lem}
Let \(P\) be a positive-definite Hermitian matrix and \(Z\) a projective algebraic subset. If \(P^2(Z)=Z\), then \(P(Z)=Z\).
\end{lem}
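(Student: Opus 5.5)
The plan is to use the one-parameter group \(P^t:=\exp(t\log P)\), \(t\in\mathbb R\), and to show that the set
\[
T:=\{t\in\mathbb R: P^t(Z)=Z\}
\]
contains \(1\). Since \(P\) is positive-definite Hermitian, \(\log P\) is a well-defined Hermitian matrix, and \(t\mapsto P^t\) is a continuous homomorphism \(\mathbb R\to\operatorname{GL}\). The hypothesis says \(2\in T\).

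First, \(T\) is a subgroup of \(\mathbb R\): if \(P^s(Z)=Z\) and \(P^t(Z)=Z\), then \(P^{s+t}(Z)=Z\) and \(P^{-s}(Z)=Z\).

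Second, \(T\) is closed. The stabilizer \(G_Z:=\{g\in\operatorname{GL}: g(Z)=Z\}\) of a projective algebraic subset is Zariski closed, hence closed in the Euclidean topology. To see this, note that \(g(Z)\subseteq Z\) is equivalent to \(f\circ g^{-1}\) vanishing on \(Z\) for all \(f\) in a finite generating set of the ideal of \(Z\); this gives polynomial conditions in the entries of \(g^{-1}\). For the reverse inclusion, use the equality of Hilbert polynomials, or simply that \(g(Z)\subseteq Z\) with \(g\) invertible forces \(g(Z)=Z\) by a dimension and Noetherian argument: the descending chain \(Z\supseteq g(Z)\supseteq g^2(Z)\supseteq\cdots\) consists of isomorphic varieties and stabilizes. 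Therefore \(T\), the preimage of \(G_Z\) under the continuous map \(t\mapsto P^t\), is a closed subgroup of \(\mathbb R\). So \(T\) is either \(c\mathbb Z\) for some \(c>0\), or \(\mathbb R\).

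The main step is to rule out \(T=c\mathbb Z\) when \(1\notin T\). I would do this algebraically. Diagonalize \(P\) in a unitary basis, with eigenvalues \(\lambda_j>0\), so that \(P^t=\operatorname{diag}(\lambda_j^t)\). The condition \(P^t(Z)=Z\) means that \(P^t\) lies in the algebraic group \(G_Z\cap D\), where \(D\) is the diagonal torus in this basis. This group \(H:=G_Z\cap D\) is a Zariski-closed subgroup of the torus, hence a diagonalizable group. Such a group is cut out by characters: \(H=\{d:\chi(d)=1\ \forall\chi\in X\}\) for some subgroup \(X\) of the character lattice \(\mathbb Z^N\). For \(d=P^t\) and \(\chi=m\in\mathbb Z^N\), the condition reads
\[
\prod_j\lambda_j^{tm_j}=1,\qquad\text{equivalently}\qquad t\sum_j m_j\log\lambda_j=0,
\]
using that the \(\lambda_j\) are positive reals, so the real logarithm is injective. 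Since \(t=2\) satisfies this for every \(\chi\in X\), we get \(\sum_j m_j\log\lambda_j=0\) for all \(\chi\in X\). Then every \(t\in\mathbb R\) satisfies it, so \(P^t\in H\) for all \(t\). In particular \(P(Z)=Z\).

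This argument actually bypasses the closed-subgroup dichotomy. The crux is the positivity of the eigenvalues, which removes root-of-unity ambiguities in the character equations.
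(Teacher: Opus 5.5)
Your proof is correct, but it takes a different route from the paper's. The paper never forms a stabilizer group. It lets \(T(f)=f\circ P\) act on each graded piece \(R_d\) of the polynomial ring. In the monomial basis attached to an eigenbasis of \(P\), this operator is diagonal with positive eigenvalues \(\prod_i\lambda_i^{\alpha_i}\). The \(T^2\)-invariant subspace \(I_d(Z)\) therefore splits along the eigenspaces of \(T^2\), and these coincide with those of \(T\) because the eigenvalues are positive. Hence \(I_d(Z)\) is \(T\)-invariant for every \(d\).

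You instead pass to the Zariski-closed stabilizer \(G_Z\), intersect it with the diagonal torus, and invoke the structure theorem that a closed subgroup of a torus is the common kernel of the characters trivial on it. Positivity then enters as injectivity of the real logarithm, giving \(\prod_j\lambda_j^{2m_j}=1\Rightarrow\prod_j\lambda_j^{tm_j}=1\).

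The two arguments hinge on the same point: positive eigenvalues rule out root-of-unity ambiguity. In fact, the standard proof of the torus theorem you cite is itself a weight-space decomposition of the ideal of \(H\) in \(\mathbb C[D]\), which is the same mechanism the paper applies directly to \(I_d(Z)\). So the paper's version is more elementary and self-contained. Yours packages the argument in algebraic-group language and states the stronger conclusion that the whole one-parameter group \(P^t\), \(t\in\mathbb R\), stabilizes \(Z\). The paper's argument also yields this, since \(f\mapsto f\circ P^t\) acts by a scalar on each eigenspace of \(T\).

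Two small remarks. First, your closed-subgroup dichotomy for \(\{t:P^t(Z)=Z\}\) is, as you note, unnecessary. Second, the condition that \(f\circ g^{-1}\) vanish on \(Z\) encodes \(g^{-1}(Z)\subseteq Z\) rather than \(g(Z)\subseteq Z\). This slip is harmless, because your Noetherian argument shows that either inclusion forces equality, so \(G_Z\) is Zariski closed in either case.
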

\begin{proof}
For a homogeneous polynomial \(f\), set \(T(f):=f\circ P\). Choose an orthonormal eigenbasis \(e_1,\ldots,e_N\) of \(P\), with \(Pe_i=\lambda_i e_i\) and \(\lambda_i>0\). Using a Hermitian inner product that is complex-linear in the first argument, define coordinates \(x_i(v):=\langle v,e_i\rangle\). Let \(R_d\) be the space of homogeneous polynomials of degree \(d\), and let \(I_d(Z)\subset R_d\) be the subspace of those vanishing on \(Z\). Declare the monomials \(x^\alpha\) to be an orthonormal basis. Then \(T(x^\alpha)=(\prod_i\lambda_i^{\alpha_i})x^\alpha\), so \(T\) is represented by a positive real diagonal matrix.

Since \(P^2(Z)=Z\), the subspace \(I_d(Z)\) is \(T^2\)-invariant. An invariant subspace of a diagonalizable operator is the direct sum of its intersections with the eigenspaces. Since all eigenvalues of \(T\) are positive, \(T\) and \(T^2\) have the same eigenspaces, and \(T\) acts by a scalar on each of them. Hence \(I_d(Z)\) is also \(T\)-invariant. Since \(T\) is invertible, \(T(I_d(Z))=I_d(Z)\). This holds for every \(d\), so \(P(Z)=Z\).
\end{proof}
Thus \(P(Z)=Z\), and consequently \(U(Z)=UP(Z)=F(Z)=Z_0\). The linear map \(\phi:(T_pM,g_p)\to\mathbb K^n\) represented by the real orthogonal matrix \(U\) therefore has the required properties. \hfill\qedsymbol

\section{Blaschke Manifolds with Adapted Complex Structure}\label{bacs}
We now prove Theorem~\ref{main} using Theorem~\ref{hopfiso}. This argument not only resolves the Blaschke conjecture under a complexification assumption, but also provides a geometric explanation for why the great quadric bundle \(Z_p\) should be the variety of minimal rational tangents (VMRT). In particular, the discussion in this section is remarkable even as a description of the standard CROSSes and their standard ambient Fano varieties.

\subsection{Zoll manifolds with entire Grauert tubes}
Every Blaschke manifold is known to be Zoll, meaning that all geodesics are periodic with the same period \cite[Corollary~1]{M13}. We first give the precise definition of an adapted complex structure and describe the structures of Zoll manifolds admitting one. For Grauert tube theory, see \cite{LS91,S91,GS91}; for more detailed references specific to Zoll metrics, see \cite{BL18,LS26,S25}. For a classical reference on Zoll manifolds themselves, see \cite{B12}.

An adapted complex structure is defined as follows.
\begin{defn}
Let \((M,g)\) be a compact Riemannian manifold. A complex structure \(J\) on the total space \(TM\) of the tangent bundle is called an \emph{adapted complex structure} if, for every geodesic \(\gamma\), the map \(\phi_\gamma\) of the Riemann foliation defined by
\[
N_\tau:TM\to TM,\qquad N_\tau(v)=\tau v,\qquad
\phi_\gamma:\mathbb C\to TM,\qquad \phi_\gamma(\sigma+i\tau)=N_\tau(\gamma'(\sigma))
\]
is holomorphic. The total space \((TM,J)\) equipped with an adapted complex structure is called an \emph{entire Grauert tube}.
\end{defn}
Thus, an adapted complex structure can be understood as a complex structure in which geodesics extend to complex curves. An entire Grauert tube is known to be a Stein manifold with the squared norm function \(\rho(v):=\|v\|_g^2\) as a strictly plurisubharmonic exhaustion. Moreover, entire Grauert tubes are strongly expected to be affine varieties, but this remains conjectural \cite{B82,D85}.

The case in which the base metric is Zoll is a model case for entire Grauert tubes. Indeed, every standard CROSS admits an entire Grauert tube \cite{PW91}. Since all geodesics are periodic with a common period \(l\), each leaf of the Riemann foliation can be viewed as a map \(\phi_\gamma:\mathbb C/l\mathbb Z\to TM\), whose domain is a genus-zero curve with two punctures.

Intuitively, we may therefore try to compactify the entire tangent bundle \(TM\) by filling in these two punctures on each leaf. In fact, even for a general Zoll manifold without a complex structure, this construction works as follows. The geodesic flow on the unit tangent bundle \(UM\) is periodic with a common period, so it defines a free \(S^1\)-action on \(UM\). We may therefore consider \(D\simeq UM/S^1\), which can be interpreted as the set of images of oriented geodesics. Classically, \(D\) is called the \emph{space of geodesics} \cite{B12}. For a leaf \(\phi_\gamma\) of the Riemann foliation corresponding to a geodesic, we may then attempt a smooth compactification by identifying its ends, where the imaginary part tends to \(+\infty\) and \(-\infty\), with the points of \(D\) corresponding to the forward and reverse orientations of the geodesic, respectively. See \cite[Section~2]{LS26} for a precise construction.
\begin{prop}\label{comp}
Let \((M,g)\) be a Zoll manifold, and let \(D\) be its space of geodesics. Then there exists a smooth compactification \(X:=TM\sqcup D\) such that the inclusion \(TM\hookrightarrow X\) is a smooth open embedding and \(D\) is a smooth submanifold of real codimension \(2\).
\end{prop}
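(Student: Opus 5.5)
The plan is to build \(X\) by gluing two pieces along an open collar: \(TM\) itself, and a tubular neighborhood of \(D\) modeled on a complex line bundle over \(D\). I will use the free \(S^1\)-action on \(UM\) given by the geodesic flow with common period \(l\). By the slice theorem for free compact group actions, \(D=UM/S^1\) is a smooth closed manifold of dimension \(2\dim M-2\), and \(UM\to D\) is a principal \(S^1\)-bundle. Let \(L\to D\) be the associated complex line bundle \(UM\times_{S^1}\mathbb C\), where \(S^1=\mathbb R/l\mathbb Z\) acts on \(\mathbb C\) by \(\sigma\cdot w=e^{2\pi i\sigma/l}w\). The zero section of \(L\) is a copy of \(D\) of real codimension \(2\).

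Next I identify the end of \(TM\) with the punctured disc bundle of \(L\). Each nonzero vector is written as \(\tau u\), with \(u\in UM\) and \(\tau>0\), and in the coordinates of the Riemann foliation the leaf through \(u\) is \(\sigma+i\tau\mapsto\tau\gamma_u'(\sigma)\). The exponential of the forward end is \(w=e^{2\pi i(\sigma+i\tau)/l}\), which tends to \(0\) as \(\tau\to+\infty\). Accordingly, I define
\[
\Phi:\{v\in TM:\|v\|>1\}\to L,\qquad \Phi(\tau u)=\bigl[u,\;e^{-2\pi\tau/l}\bigr].
\]
This map respects the equivalence \([\phi_\sigma u,w]=[u,e^{2\pi i\sigma/l}w]\), which is consistent with moving along the leaf. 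Since \(UM\times(1,\infty)\to TM\setminus\{\|v\|\le1\}\) is a diffeomorphism, \(\Phi\) is a diffeomorphism onto the punctured disc bundle \(\{0<|w|<e^{-2\pi/l}\}\). Gluing \(TM\) to this disc bundle of \(L\) along \(\Phi\) then produces a smooth manifold \(X=TM\sqcup D\).

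The construction has to be checked on the negative end as well. A vector \(\tau u\) with \(\tau<0\) equals \(|\tau|(-u)\), so the reverse-oriented geodesic is already covered by the same chart. The two ends of a single leaf therefore land at the two points \([u]\) and \([-u]\) of \(D\), as required. The remaining properties are then immediate: \(X\) is compact, Hausdorff because both pieces are Hausdorff and the gluing is along a closed-complement collar, and \(TM\hookrightarrow X\) is a smooth open embedding by construction. Moreover, \(D\) is the zero section of \(L\), hence a smooth submanifold of real codimension \(2\).

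The main obstacle is consistency: one must verify that the chart \(\Phi\) is a well-defined diffeomorphism, not merely a bijection. This reduces to smoothness of the quotient map \(UM\to D\) and to the freeness of the action. Freeness holds because all geodesics have the same least period, so no orbit is exceptional.
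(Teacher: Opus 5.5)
Your proposal is correct and is essentially the construction the paper has in mind. The paper only sketches filling in the two ends of each leaf by the points of \(D\) for the two orientations, and defers the details to \cite[Section~2]{LS26}; your gluing of \(TM\) to the open disc bundle of \(UM\times_{S^1}\mathbb C\) via \(\tau u\mapsto[u,e^{-2\pi\tau/l}]\) makes this explicit. In your chart the two ends of a leaf read as \(w=e^{2\pi i(\sigma+i\tau)/l}\) and \(w=e^{-2\pi i(\sigma+i\tau)/l}\) in the fibers over \([u]\) and \([-u]\), respectively, which is the normalization compatible with the holomorphic leaf coordinate used later in Proposition~\ref{fano}. The one step to state more carefully is Hausdorffness: "both pieces are Hausdorff" is not enough on its own. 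It holds because you glue the \emph{open} disc bundle of radius \(e^{-2\pi/l}\), so a sequence leaving the collar on one side cannot converge on the other; equivalently, the graph of \(\Phi\) is closed.
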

If the Zoll manifold admits an adapted complex structure, this compactification \(X\) has the following properties \cite{BL18,LS26,S25}.
\begin{prop}\label{fano}
Let \((M,g)\) be a Zoll manifold of type \((a,n)\) admitting an adapted complex structure, and let \(X:=TM\sqcup D\) be the compactification in Proposition~\ref{comp}.
\begin{enumerate}
\item \(X\) is a Fano manifold with Picard number \(1\) if \(a\neq 2\), and \(2\) if \(a=2\). The divisor \(D\) is ample, and \(-K_X=\frac{a(n+1)}{2}D\).
\item Each leaf of the Riemann foliation extends holomorphically to \(\phi_\gamma:\mathbb P^1\to X\), meeting \(D\) transversely at the two added points. We call the extended map \(\phi_\gamma\), or its image, a leaf.
\item The direction-reversing map \(N_{-1}:TM\to TM\), \(N_{-1}(v)=-v\), extends to an antiholomorphic involution \(\tau:X\to X\), which has no fixed points on \(D\).
\item If \(X\) is biholomorphic to the standard ambient Fano variety of a CROSS, then \(M\) is isometric to the standard CROSS, up to constant rescaling.
\item If \((M,g)\) is Blaschke, then the tangent bundle splits as \(\phi_\gamma^*TX\simeq\mathcal O(2)^{\oplus a}\oplus\mathcal O(1)^{\oplus(an-a)}\) along \(\phi_\gamma\).
\end{enumerate}
\end{prop}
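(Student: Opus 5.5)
The plan is to treat items (1)–(4) as a repackaging of the constructions in \cite{BL18,LS26,S25}, and to give a direct argument for (5), which is the part that actually uses the Blaschke condition.

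\emph{Items (1)–(4).} First, I would upgrade the periodic geodesic flow to a holomorphic action. On an entire Grauert tube the function \(\sqrt{\rho}=\|v\|_g\) is plurisubharmonic off the zero section. For a Zoll metric of period \(l\), its Hamiltonian flow with respect to the Kähler form \(-dd^c\rho\) is the normalized geodesic flow, which is \(l\)-periodic. It therefore gives a holomorphic \(S^1\)-action on \(TM\setminus M\), and adaptedness complexifies it to a \(\mathbb C^*\)-action whose orbits are exactly the punctured leaves \(\phi_\gamma\). Filling the two ends of each orbit, as in Proposition~\ref{comp}, produces \(D\) as a complex quotient \((TM\setminus M)/\mathbb C^*\) near each end. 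The charts of Proposition~\ref{comp} become holomorphic, so \(X\) is a compact complex manifold and every leaf extends to \(\phi_\gamma:\mathbb P^1\to X\) meeting \(D\) transversely at two points; this gives (2). Next, \(N_{-1}\) is antiholomorphic for an adapted structure (it reverses \(\tau\)), and it swaps the two ends of each leaf, which proves (3). Ampleness of \(D\) would come from exhibiting \(\rho\) as the potential of a positively curved metric on \(\mathcal O(D)\) near \(D\), via a symplectic-cut description. Then \(X\) is projective, and since the leaves cover \(X\), it is rationally connected. The Picard number equals \(b_2(X)\), which the Bott–Samelson cohomology of \(M\) determines via the Gysin sequence of \(UM\to D\) and the decomposition \(X=TM\sqcup D\): it is \(1\) for \(a\neq2\) and \(2\) for \(a=2\). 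The identity \(-K_X=\tfrac{a(n+1)}{2}D\) is obtained by pairing with a leaf (\(D\cdot\phi_\gamma=2\)) and computing \(-K_X\cdot\phi_\gamma\) in (5); when \(a=2\) one uses both leaf classes. For (4), I would invoke the uniqueness of adapted complex structures: a biholomorphism to the standard model carries \(D\) to the standard divisor and the Riemann foliation to the standard one, and then the metric is recovered from \(\rho\) up to scale.

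\emph{Item (5).} Since \(\phi_\gamma^*TX\) splits as \(\bigoplus\mathcal O(d_i)\) with \(T\mathbb P^1=\mathcal O(2)\subset\phi_\gamma^*TX\), I would do three things:
\begin{enumerate}
\item Compute \(\sum d_i=-K_X\cdot\phi_\gamma=a(n+1)\). This follows from (1), or independently from the holonomy of \(\det TX\) around the leaf.
\item Show that every \(d_i\ge1\). Leaves pass through every point of \(X\) in every direction, so \(\phi_\gamma\) is free, giving \(d_i\ge0\). Moreover, for a fixed \(p\in M\) the leaves through \(p\) already sweep out a neighbourhood in all normal directions at a second point, which should give \(d_i\ge1\).
\item Show that there are exactly \(a\) summands of degree at least \(2\), and none of degree at least \(3\). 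The deformations of \(\phi_\gamma\) fixing \(p\) and the antipodal cut point \(q=\gamma(l/2)\) are governed by \(H^0(\phi_\gamma^*TX(-2))\). Its real points are the leaves through \(p\) and \(q\), that is, a \((a-1)\)-sphere of directions together with reparametrization. By Morse index \(a-1\), these are exactly the Jacobi fields vanishing at both \(p\) and \(q\), so \(h^0(\phi_\gamma^*TX(-2))=a\). The same count with three fixed points gives \(h^0(\phi_\gamma^*TX(-3))\) the value of \(\operatorname{Aut}\) of \(\mathbb P^1\) fixing three points, namely \(0\).
\end{enumerate}
These three facts force \(\phi_\gamma^*TX\simeq\mathcal O(2)^{\oplus a}\oplus\mathcal O(1)^{\oplus(an-a)}\), since \(2a+(an-a)=a(n+1)\).

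The main obstacle I expect is identifying complex sections of \(\phi_\gamma^*TX(-2)\) with real Jacobi fields. The Blaschke condition (injectivity radius equal to diameter) must rule out extra holomorphic deformations through \(p,q\) that are not complexifications of real ones. I would handle this with the antiholomorphic involution \(\tau\) from (3): the section space is \(\tau\)-real, so it is the complexification of its real part, and that real part is the space of \(p\)–\(q\) Jacobi fields, whose dimension is \(a\) by Bott–Samelson.
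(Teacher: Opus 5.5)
Your argument for (5) rests on the claim in step 3 that the \(\tau\)-real part of \(H^0(\phi_\gamma^*TX(-2))\), and of \(H^0(\phi_\gamma^*TX(-3))\), consists of extensions of Jacobi fields. Nothing supports this. \(\tau\)-reality only says that a holomorphic section is tangent to \(M\) along \(\gamma\); it imposes no Jacobi equation. The untwisted analogue is in fact false. Once the splitting holds, the \(\tau\)-real part of \(H^0(N_{\phi_\gamma/X})\) has dimension \(2an+a-3\), whereas normal Jacobi fields along \(\gamma\) form a space of dimension \(2an-2\). For the round \(S^n\), the extra real conics are the small circles. So the equality you need for the twisted bundles is a consequence of the splitting, not an input. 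The gap matters: for \(a\geq3\), step 1, step 2 and even the equality \(h^0(\phi_\gamma^*TX(-2))=a\) are all satisfied by \(\mathcal O(2)\oplus\mathcal O(a)\oplus\mathcal O(1)^{\oplus(an-2)}\). Everything therefore hinges on the unproved vanishing of \(h^0(\phi_\gamma^*TX(-3))\).

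Step 2 is also wrong as written. Through a point of \(TM\setminus M\) there passes exactly one leaf. The leaves through \(p\) form only a real \((an-1)\)-parameter family and do not sweep out a neighbourhood.

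What is missing is the paper's use of \emph{pointwise} independence instead of section counts. Take the holomorphic extensions of normal Jacobi fields (the parallel vector fields of \cite{LS26}): \(a-1\) of them vanish at \(p\) and \(q\), and \(an-a\) vanish only at \(p\). These are linearly independent at a general point of the leaf, so they give a generically isomorphic map \(\mathcal O(2)^{\oplus(a-1)}\oplus\mathcal O(1)^{\oplus(an-a)}\to N_{\phi_\gamma/X}\). By (1), \(\deg N_{\phi_\gamma/X}=-K_X\cdot\phi_\gamma-2=an+a-2\), which equals the degree of the source. Hence the determinant is a nonzero map between line bundles of equal degree, and the map is an isomorphism. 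Finally, \(\operatorname{Ext}^1(N_{\phi_\gamma/X},\mathcal O(2))=0\) splits the tangent sequence, and no upper bound on sections is ever needed.

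For (4), you assume that a biholomorphism \(F:X\to X_0\) automatically carries \(D\) to \(D_0\) and the Riemann foliation to the standard one. This fails in general, because \(\operatorname{Aut}(X_0)\) is much larger than the stabilizer of \(D_0\): for instance \(\operatorname{PGL}(2n+2,\mathbb C)\) versus \(\operatorname{PSp}(2n+2,\mathbb C)\) for \(\operatorname{Gr}(2,2n+2)\). Uniqueness of adapted complex structures concerns a fixed metric and does not address this. The paper has to correct \(F\) in three steps:
\begin{enumerate}
\item By (1), \(F(D)\) is a smooth hyperplane section, and such sections form a single \(\operatorname{Aut}(X_0)\)-orbit.
\item The Cartan fixed-point theorem on \(H/K\), with \(H=\operatorname{Aut}(X_0,D_0)\), conjugates the transported involution to \(k\tau_0\) with \(k\in K\).
\item One has \(k=1\), since otherwise \(k\tau_0\) would fix both points where a \(k\)-reversed leaf meets \(D_0\), contradicting (3).
\end{enumerate}
Only then does the argument of \cite{LS26} apply.

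Your route to \(-K_X=\frac{a(n+1)}{2}D\) in (1) is circular. It uses (5), which requires the Blaschke hypothesis while (1) is stated for Zoll manifolds, and step 1 of your (5) in turn uses (1). Moreover, for \(a=2\) all leaves lie in a single class, so pairing with leaves cannot determine \(-K_X\) in a Picard group of rank two. The paper takes (1) from \cite[Theorem~3.2]{S25}.
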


\begin{proof}
Assertion~(1) is \cite[Theorem~3.2]{S25}, and assertions~(2) and~(3) follow from the constructions in \cite{BL18,LS26}.

For assertion~(4), the \(\mathbb CP^n\) case was proved in the proof of the main theorem in \cite{LS26}. In the \(\mathbb HP^n\) and \(\mathbb OP^2\) cases, if a biholomorphism \(F:X\to X_0\) can also be shown to identify \(D\) and \(\tau\) with the standard divisor \(D_0\) and the standard antiholomorphic involution \(\tau_0\) on \(X_0\), then the remainder of that proof applies in the same way. We obtain this property by further modifying \(F\).

First, the biholomorphism and assertion~(1) imply that \(F(D)\) is a hyperplane section of \(\operatorname{Gr}(2,2n+2)\) or \(E_6/P_1\), respectively. We can therefore show that \(F(D)\) and \(D_0\) are related by an automorphism of the corresponding symmetric space.
\begin{enumerate}
\item For \(\operatorname{Gr}(2,2n+2)\), consider the linear form \(\omega\in(\bigwedge^2\mathbb C^{2n+2})^*=\bigwedge^2(\mathbb C^{2n+2})^*\) defining \(F(D)\) in the Pl\"ucker embedding. Since the hyperplane section defined by \(\omega\) is smooth, the differential of its defining equation at \(U=\langle u,v\rangle\in F(D)\), given by \((u',v')\mapsto\omega(u',v)+\omega(u,v')\), cannot vanish identically. Thus \(\omega\) is nondegenerate. All nondegenerate skew-symmetric forms are equivalent under a change of basis.
\item For \(E_6/P_1\), the smooth hyperplane sections of \(E_6/P_1\subset\mathbb P^{26}\) form a single orbit under \(\operatorname{Aut}(E_6/P_1)\) by \cite[Propositions~4.1 and~8.1]{LM01}.
\end{enumerate}
Thus, composing with a suitable automorphism of \(X_0\), we may choose a biholomorphism \(F:X\to X_0\) identifying \(D\) with \(D_0\). For convenience, we again denote the transported antiholomorphic involution \(F\circ\tau\circ F^{-1}\) on \(X_0\) by \(\tau\).

Let \(H:=\operatorname{Aut}(X_0,D_0)\) be the group of holomorphic automorphisms of \(X_0\) preserving \(D_0\). Since both \(\tau\) and \(\tau_0\) preserve \(D_0\), their composition \(\tau\tau_0\) belongs to \(H\). The divisor \(D_0\), the group \(H\), and a maximal compact subgroup \(K\subset H\) are as follows:
\[
\begin{array}{c|c|c|c}
X_0 & D_0 & H & K\\ \hline
\operatorname{Gr}(2,2n+2) & \operatorname{IG}(2,2n+2) & \operatorname{PSp}(2n+2,\mathbb C) & \operatorname{Sp}(n+1)/\{\pm I\}\\
E_6/P_1 & F_4(\mathbb C)/P_4 & F_4(\mathbb C) & F_{4,c}
\end{array}
\]
Here \(\operatorname{IG}(2,2n+2)\) denotes the symplectic isotropic Grassmannian, and \(F_{4,c}\) denotes the compact real form of \(F_4(\mathbb C)\). The group \(K\) acts on the ambient complex space by extending the action of the connected isometry group of the standard manifold \(M_0=\mathbb HP^n\) or \(\mathbb OP^2\), respectively. Note also that this action commutes with \(\tau_0\). Since \(\tau\tau_0\in H\), we have \(\tau\in H\rtimes\langle\tau_0\rangle\). Applying the Cartan fixed-point theorem to \(H/K\), we obtain \(h\in H\) and \(k\in K\) such that \(h\tau h^{-1}=k\tau_0\). Since \(\tau_0\) commutes with \(k\), we have \((k\tau_0)^2=k^2=1\).

We now show that \(k=1\). This gives \(\tau_0=h\tau h^{-1}\), so replacing \(F\) by \(h\circ F\) yields the desired conclusion. Suppose that \(k\neq 1\). Since \(\mathbb HP^n=\operatorname{Sp}(n+1)/(\operatorname{Sp}(n)\times\operatorname{Sp}(1))\) and \(\mathbb OP^2=F_4/\operatorname{Spin}(9)\), the acting compact group and its isotropy subgroup have the same rank. Thus every \(k\in K\) has a fixed point \(p\in M_0\). An isometry of a symmetric space is determined by its value and differential at one point. Hence \(k^2=1\), \(k\neq 1\), and \(k(p)=p\) imply that there exists a nonzero vector \(v\in T_pM_0\subset T_p X_0\) such that \(dk_p(v)=-v\).

Recall that \(\tau_0\) is the standard involution and that \(k\) is induced by an isometry of \(M_0\). Thus \(k\tau_0\) reverses the orientation of the geodesic \(\gamma\subset M_0\) with \(\gamma'(0)=v\), and consequently preserves the corresponding leaf \(\phi_\gamma\) in the \emph{standard space}. Write \(d_+\) and \(d_-\) for the two points where this leaf meets \(D_0\).
\begin{enumerate}
\item By definition, \(\tau_0(d_+)=d_-\) and \(\tau_0(d_-)=d_+\).
\item Since \(k\) reverses the orientation of the geodesic, \(k(d_+)=d_-\) and \(k(d_-)=d_+\).
\end{enumerate}
Thus \(k\tau_0\) fixes both \(d_+\) and \(d_-\) on \(D_0\). However, \(k\tau_0=h\tau h^{-1}\), whereas \(\tau\) has no fixed points on \(D_0\). This is a contradiction, proving the desired conclusion.

We now prove assertion~(5), the tangent bundle splitting. First, consider the normal bundle \(N_{\phi_\gamma/X}\). There are special holomorphic normal vector fields along \(\phi_\gamma\): normal Jacobi fields along \(\gamma\) extend uniquely to holomorphic normal vector fields along the leaf \(\phi_\gamma\), and their extensions do not vanish away from the geodesic (See \emph{parallel vector field} in \cite{LS26}). Consider a geodesic \(\gamma\) from \(p\in M\) to \(q\in B_p\). There exist \(a-1\) normal Jacobi fields \(\xi_1,\ldots,\xi_{a-1}\) vanishing at \(p\) and \(q\), and \(an-a\) normal Jacobi fields \(\xi_a,\ldots,\xi_{an-1}\) vanishing only at \(p\), such that \(\xi_1,\ldots,\xi_{an-1}\) are linearly independent. By the properties of these extensions, the resulting holomorphic normal vector fields \(\xi_1^{1,0},\ldots,\xi_{an-1}^{1,0}\) along \(\phi_\gamma\) satisfy the following:
\begin{enumerate}
\item The fields \(\xi_1^{1,0},\ldots,\xi_{a-1}^{1,0}\) vanish at \(p\) and \(q\), whereas \(\xi_a^{1,0},\ldots,\xi_{an-1}^{1,0}\) vanish only at \(p\).
\item At general points away from \(p\) and \(q\), these fields form a basis of the holomorphic normal bundle.
\end{enumerate}
Here (2) follows because, by uniqueness of extension, linear combinations of these fields are again extensions of normal Jacobi fields.

The saturation of the subsheaf generated by each \(\xi_i^{1,0}\) is therefore \(\mathcal O(p+q)\) or \(\mathcal O(p)\). These fields induce a bundle map \(F:\mathcal O(2)^{\oplus(a-1)}\oplus\mathcal O(1)^{\oplus(an-a)}\to N_{\phi_\gamma/X}\), which is an isomorphism away from \(p\) and \(q\). Combining assertion~(1) with the fact that \(\phi_\gamma\) meets \(D\) transversely at two points gives
\[
\deg N_{\phi_\gamma/X}=-K_X\cdot\phi_\gamma-2=an+a-2.
\]
Thus \(\det F\) is a nonzero bundle map between line bundles of the same degree on \(\mathbb P^1\), so \(F\) is a bundle isomorphism. Finally, \(T\phi_\gamma\simeq\mathcal O(2)\) and \(\operatorname{Ext}^1(N_{\phi_\gamma/X},T\phi_\gamma)=0\), giving the asserted splitting.
\end{proof}

\subsection{Proof of Theorem~\ref{main}}
Theorem~\ref{main} was already proved for \((a,n)=(2,n)\), namely the \(\mathbb CP^n\) case, in \cite[Main theorem]{LS26}. As mentioned in the introduction, the Blaschke conjecture has already been resolved for \((a,n)=(a,1)\) and \((1,n)\), namely the \(S^a\) and \(\mathbb RP^n\) cases. Henceforth, we therefore assume that \((a,n)=(4,n)\) or \((8,2)\).

\begin{prop}\label{break}
Let $(M,g)$ be a Blaschke manifold of type $(a,n)$, with $a=2,4,8$, admitting an adapted complex structure, and let $X=TM\cup D$ be the projective compactification in Proposition \ref{fano}. For a general point $p\in M$, let $Z_p\subset\mathbb P(T_pX)$ be the great quadric bundle at $p$. Then every direction $[\xi]\in Z_p$ is the tangent direction at $p$ of a minimal rational curve on $X$.
\end{prop}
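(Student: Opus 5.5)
We will run bend-and-break on the leaves through a fixed pair of antipodal points. Fix a general point \(p\in M\) and a point \(q\in B_p\), and let \(V=(\mathcal V_p)_q\). Every unit vector \(v\in S(V)\) determines a geodesic \(\gamma_v\) from \(p\) to \(q\), hence a leaf \(\phi_v:=\phi_{\gamma_v}:\mathbb P^1\to X\) passing through \(p\) and \(q\).

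By Proposition~\ref{fano}(1),(2), each leaf satisfies \(D\cdot\phi_v=2\). Hence \(-K_X\cdot\phi_v=a(n+1)\). The generator of the Picard group (or, for \(a=2\), the appropriate primitive classes) has degree \(1\) on minimal curves. So \(\phi_v\) is a conic, and minimal rational curves are those with \(D\)-degree \(1\). By Proposition~\ref{fano}(5), \(\phi_v^*TX\simeq\mathcal O(2)^{\oplus a}\oplus\mathcal O(1)^{\oplus(an-a)}\). Therefore the space of deformations of \(\phi_v\) fixing \(p\) and \(q\) has tangent space \(H^0(\phi_v^*TX(-p-q))\simeq\mathbb C^{a}\). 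Modulo \(\operatorname{Aut}(\mathbb P^1;0,\infty)\simeq\mathbb C^*\), this gives a family \(\mathcal F\) of dimension \(a-1\). The real family \(v\in S(V)\simeq S^{a-1}\) sits inside it as a totally real slice, and Jacobi fields vanishing at \(p\) and \(q\) are exactly the \(\mathcal O(2)\)-directions twisted down. Complexifying, the tangent directions at \(p\) of members of \(\mathcal F\) sweep out an open subset of the affine quadric \(\{g_{\mathbb C}(\xi,\xi)=1\}\subset V^{\mathbb C}\). This is the complexification of \(S(V)\), where we identify \(T_pX\simeq T_pM^{\mathbb C}\) via the totally real embedding. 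Its projective closure in \(\mathbb P(V^{\mathbb C})\) meets infinity precisely along \(\rho^{-1}(q)\subset Z_p\).

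Next we degenerate. Take a holomorphic one-parameter family in \(\mathcal F\) whose tangent direction \([\xi_t]\) at \(p\) tends to a given isotropic \([\xi]\in\rho^{-1}(q)\), for instance along a complex line in the quadric tending to infinity. The family of conics through two fixed points \(p\) and \(q\) cannot stay inside a compact family of irreducible conics that stay unbent. By Mori's bend-and-break (rigidity lemma), the limit cycle is reducible. Since \(D\cdot(\text{limit})=2\), the limit splits as \(C_1+C_2\) with \(D\cdot C_i=1\). Each \(C_i\) is therefore a minimal (degree-one) rational curve, and one component passes through \(p\). At a general point \(p\), minimal curves through \(p\) are free and standard, so the component \(C_1\ni p\) is smooth at \(p\). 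Its tangent direction is the limit of \([\xi_t]\), namely \([\xi]\). This limiting argument needs the evaluation of tangent directions to be continuous, which holds because \(p\) is a smooth point of \(C_1\) and \(C_2\) does not pass through \(p\) with the same tangent. Varying \(q\in B_p\) then covers all of \(Z_p\).

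The main obstacle is this last identification. We must show that the limit splits exactly at the boundary point \([\xi]\), with the component through \(p\) not contracted. This means ruling out the possibility that the family degenerates by the parametrization collapsing near \(p\), which would leave \(C_1\) contracted and give no tangent direction at \(p\). We would attempt this with a cross-ratio argument. Normalize each \(\phi_t\) so that \(\phi_t(0)=p\), \(\phi_t(\infty)=q\), and the two points of \(\phi_t^{-1}(D)\) are \(1\) and \(\lambda_t\). The tangent direction at \(p\) then determines \(\lambda_t\) through \(g_{\mathbb C}(\xi_t,\xi_t)\), using the explicit formula \(\phi_\gamma(\sigma+i\tau)=\tau\gamma'(\sigma)\) in the real slice extended analytically. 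Isotropy forces \(\lambda_t\to1\), which means the two \(D\)-points collide while \(p\) and \(q\) stay separated. In the stable limit this makes the node separate \(\{0\}\) from \(\{\infty\}\), with each component carrying one \(D\)-point. So the component through \(p\) has \(D\)-degree one and is non-contracted, and it is the minimal rational curve we need.
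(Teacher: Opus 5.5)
Your setup matches the paper's: leaves through $p$ and a fixed $q\in B_p$, normalized by $p\mapsto0$, $q\mapsto\infty$ and one $D$-point $\mapsto1$, followed by a limit over an isotropic direction. The gap is in the step you flag as the main obstacle, which rests on a false premise. The cross-ratio never moves. In the coordinate $z=(e^{iw}-1)/(e^{iw}+1)$, every leaf has $p$, $q$ and its two $D$-points at $0,\infty,1,-1$. Since stabilization $\overline M_{0,4}(X,\beta)\to\overline M_{0,4}$ is a morphism, $\operatorname{cr}\equiv-1$ on the whole Zariski closure of the family, limits included. So $\lambda_t\equiv-1$.

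What blows up instead is the normalized tangent vector. The value $g_{\mathbb C}(f'(0),f'(0))$ is also constant ($-4$ for period $2\pi$), so approaching an isotropic direction forces $f'(0)\to\infty$. There is no relation in which $g_{\mathbb C}(\xi_t,\xi_t)$ determines $\lambda_t$; a direction does not even determine $g_{\mathbb C}(\xi,\xi)$. Even if $\lambda_t\to1$ did occur, your conclusion would not follow. A collision of $d_1,d_2$ in the source produces a bubble separating $\{d_1,d_2\}$ from $\{x_p,x_q\}$, which leaves $x_p,x_q$ on a common component — the opposite of the split you describe.

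You can check this in the model $S^2\subset Q^2=\{z_1^2+z_2^2+z_3^2=z_0^2\}$ with $D=\{z_0=0\}$, $p=[1:0:0:1]$, $q=[1:0:0:-1]$. The conics through $p,q$ are $Q^2\cap\{bz_1+cz_2=0\}$, parametrized by $\mu\mapsto[\beta(1+\mu^2):2c\mu:-2b\mu:\beta(1-\mu^2)]$ with $\beta^2=b^2+c^2$. Their $D$-points sit at $\mu=\pm i$ for every $(b,c)$. As $\beta\to0$, the conic breaks into a line through $p$ and a line through $q$ meeting at a point of $D$. The original $\mu$-line becomes a contracted bubble at that node, carrying $d_1,d_2$ at $\pm i$. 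So the \emph{images} of the $D$-points collide, but the marked source stays at cross-ratio $-1$.

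The missing idea is to exploit this constancy, as the paper does. The stabilized four-pointed source of any limit lies in the interior $M_{0,4}$, which excludes the partitions $\{x_p,x_q\}\mid\{d_1,d_2\}$ and $\{x_p,d_i\}\mid\{x_q,d_j\}$. Combine this with ampleness of $D$ (total $D$-degree $2$), stability of contracted components, the tree structure, and the fact that a $D$-degree-one component through $p\notin D$ meets $D$ only once. This rules out a $D$-degree-two component, since the rest would be contracted tails forcing $\{x_p,x_q\}\mid\{d_1,d_2\}$. It also rules out a contracted component at $x_p$, which would be a bridge between two degree-one components and would force $\{x_p,d_i\}\mid\{x_q,d_j\}$.

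Invoking Mori's bend-and-break and asserting that ``the limit splits as $C_1+C_2$'' addresses neither possibility. Reducibility over an isotropic direction actually comes from $g_{\mathbb C}(f'(0),f'(0))=-4\neq0$ on smooth-source members. Getting a limit over \emph{every} isotropic direction needs the tangent image to be Zariski dense in $\mathbb P(V^{\mathbb C})$, together with properness of the graph closure; an open piece of the affine quadric is not enough.

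Two smaller points. ``Free and standard'' holds only for general members, so smoothness at $p$ of \emph{every} $D$-degree-one curve through $p$ needs a separate input such as \cite[Proposition~6.2]{BKK22}. Minimality should come from ampleness of $D$ plus avoiding non-dominating families at $p$, rather than from an unproved claim about Picard generators.
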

\begin{proof}
Choose a general point $p\in M\subset X$. By \cite[Proposition~6.2]{BKK22}, there exists a dense Zariski open subset $U\subset X$ such that every rational curve $C\subset X$ with $D\cdot C=1$ passing through a point $p\in U$ is smooth at $p$. Since $M$ is Zariski dense in $X$, we may choose $p\in M\cap U$. Shrinking $U$ if necessary, we also avoid the loci of the finitely many non-dominating families of rational curves of $D$-degree one.

Fix $q\in B_p$, and normalize the common length of the closed geodesics to $2\pi$. Let $\gamma$ be a unit-speed closed geodesic with $\gamma(0)=p$ and $\gamma(\pi)=q$, and let $\phi_\gamma$ be the corresponding leaf. Set $V:=(\mathcal V_p)_q\simeq\mathbb R^a$ and $V^{\mathbb C}:=V\otimes_{\mathbb R}\mathbb C\simeq\mathbb C^a$. The compactified leaf meets $D$ transversely at the two points $0_\gamma$ and $\infty_\gamma$, so its $D$-degree is two.

With the convention $\phi_\gamma(w)=N_\tau(\gamma'(\sigma))$ for $w=\sigma+i\tau$, introduce the coordinate $z=(e^{iw}-1)/(e^{iw}+1)$ on the compactified leaf, and denote the resulting map by $f_\gamma:\mathbb P^1\to X$. The points $p$ and $q$ correspond to $w=0$ and $w=\pi$, while $\infty_\gamma$ and $0_\gamma$ correspond to the ends $\operatorname{Im}w\to+\infty$ and $\operatorname{Im}w\to-\infty$, respectively. Their $z$-coordinates are therefore $0,\infty,-1,1$. Moreover, $f_\gamma'(0)=-2i\gamma'(0)$, and hence
\begin{equation}\label{leaf}
\operatorname{cr}(0,\infty;1,-1)=-1,
\qquad
g_{\mathbb C,p}\bigl(f_\gamma'(0),f_\gamma'(0)\bigr)=-4.
\end{equation}
Both values are independent of the choice of $\gamma$ through $p$ and $q$.

Let $\beta$ be the common curve class of the compactified leaves, and let $H_{p,q}\subset\overline M_{0,4}(X,\beta)$ be the Zariski closure of the stable maps $[\mathbb P^1,0,\infty,1,-1;f_\gamma]$ as $\gamma$ varies. Write a member of $H_{p,q}$ as $[C,x_p,x_q,d_1,d_2;f]$. By closedness of the evaluation conditions, every member satisfies $f(x_p)=p$, $f(x_q)=q$, and $f(d_1),f(d_2)\in D$. Let $H_{p,q}^{\mathrm{sm}}$ denote the locus where the source $C$ is a smooth $\mathbb P^1$.

The forgetful morphism $\operatorname{st}:H_{p,q}\to\overline M_{0,4}$, which forgets the map and stabilizes the marked source, is constant with value $-1$ by \eqref{leaf} and the definition of $H_{p,q}$. On $H_{p,q}^{\mathrm{sm}}$, normalize the source coordinate by $x_p=0$, $x_q=\infty$, and $d_1=1$. Then $d_2=-1$. The expression $g_{\mathbb C,p}(f'(0),f'(0))$ is algebraic on this locus, so the second identity in \eqref{leaf} also holds throughout $H_{p,q}^{\mathrm{sm}}$. In particular, $f'(0)\neq0$, and the tangent evaluation $\tau:H_{p,q}^{\mathrm{sm}}\to\mathbb P(T_pX)$, defined by $\tau(f)=[f'(0)]$, is a morphism.

For the actual leaves, the tangent directions at $p$ form $\mathbb P(V)\simeq\mathbb{RP}^{a-1}$, which is Zariski dense in $\mathbb P(V^{\mathbb C})\simeq\mathbb P^{a-1}$. Since the condition $\tau(f)\in\mathbb P(V^{\mathbb C})$ is algebraic and holds for every leaf, it holds throughout $H_{p,q}^{\mathrm{sm}}$. Consequently, $\overline{\tau(H_{p,q}^{\mathrm{sm}})}=\mathbb P(V^{\mathbb C})$.

Consider the Zariski closure of the graph, $\Gamma:=\overline{\{(h,\tau(h)):h\in H_{p,q}^{\mathrm{sm}}\}}\subset H_{p,q}\times\mathbb P(V^{\mathbb C})$. The projectivity of the stable-map moduli space implies that the second projection of $\Gamma$ has closed image. Since this image contains $\tau(H_{p,q}^{\mathrm{sm}})$, it equals $\mathbb P(V^{\mathbb C})$. Thus, for every $[v]\in\mathbb P(V^{\mathbb C})$, there is a stable map $f\in H_{p,q}$ such that $(f,[v])\in\Gamma$.

Fix such a pair with $g_{\mathbb C,p}(v,v)=0$. If $f\in H_{p,q}^{\mathrm{sm}}$, then $[v]=\tau(f)$, contradicting \eqref{leaf}. Hence the source of $f$ is reducible. Nevertheless, its stabilized four-pointed source lies in the interior $M_{0,4}\subset\overline M_{0,4}$, since $\operatorname{st}(f)=-1$. We now examine its components.
\begin{enumerate}
\item There is no component of $D$-degree two.\\
Otherwise, all other components would be contracted, since $D$ is ample and the total degree is two. As the dual graph is a tree, every connected component of the complement of the degree-two component would be a contracted tail. Stability forces each such tail to carry at least two markings. All markings on a contracted tail have the same image, so $p\neq q$ and $p,q\notin D$ imply that its markings must be $d_1,d_2$. The stabilization would then have the partition $\{x_p,x_q\}\mid\{d_1,d_2\}$, giving a boundary value of the cross-ratio, a contradiction. Thus there are exactly two nonconstant components, both of $D$-degree one.

\item The component containing $x_p$ is not contracted.\\
Suppose otherwise, and let $G$ be the maximal connected union of contracted components containing $x_p$. Since $G$ maps to $p$, it contains none of the other markings. If $G$ has $k$ components and meets its complement at $b$ nodes, stability gives $2(k-1)+b+1\geq3k$, hence $b\geq k+1$. Each branch attached to $G$ contains a nonconstant component, and there are only two such components. Since the dual graph is a tree, $b\leq2$. Therefore $k=1$ and $b=2$: the contracted component $E_0$ containing $x_p$ joins two degree-one components $E_1,E_2$.

Any additional contracted subtree must be a tail attached to this configuration. As above, such a tail must contain both $d_1,d_2$ and is excluded by the constant cross-ratio. Hence the source consists precisely of $E_0,E_1,E_2$. The markings $d_1,d_2$ lie on $E_1\cup E_2$. Each $E_i$ passes through $p\notin D$, and the nonzero section $(f|_{E_i})^*s_D$ has degree one, so it cannot vanish at two distinct marked points. Thus $d_1,d_2$ lie on different components. The marking $x_q$ also lies on one of these two components. Stabilization then gives either $\{d_1,x_q\}\mid\{d_2,x_p\}$ or $\{d_1,x_p\}\mid\{d_2,x_q\}$, again contradicting the constant cross-ratio.
\end{enumerate}

Let $E_p$ be the component containing $x_p$, and set $C_p:=f(E_p)$. Since $E_p$ has $D$-degree one, $f|_{E_p}$ is the normalization map of $C_p$, and $D\cdot C_p=1$. By our choice of $p$, the curve $C_p$ is smooth at $p$, so $df_{x_p}\neq0$.

To identify its tangent direction, take a one-parameter specialization in $\Gamma$ to $(f,[v])$ whose general member belongs to the graph of $\tau$. After a finite base change if necessary, represent this specialization by a family of stable maps. Near the marked point $x_p$, the family of source curves is smooth, so the map can be written locally as $F(t,u)$ with $F(t,0)=p$ and $\partial_uF(0,0)\neq0$. Continuity of the differential gives $[v]=\lim_{t\to0}[\partial_uF(t,0)]=[T_pC_p]$.

Since $D$ is ample, $C_p$ has the smallest possible positive $D$-degree. By our choice of $p$, it belongs to a dominating family, which is unsplit because its members have $D$-degree one. Thus $C_p$ is a minimal rational curve. Letting $q$ vary over $B_p$ proves the assertion for every direction in $Z_p$.
\end{proof}

\begin{prop}\label{analytic}
Let $(M,g)$ be a Blaschke manifold admitting an adapted complex structure. For any $p\in M$, $Z_p\subset \mathbb P(T_pX)$ is a real-analytic submanifold of real dimension $\dim_{\mathbb R}Z_p=a(n+1)-4$.
\end{prop}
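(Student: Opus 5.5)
The plan is to show that the real Blaschke bundle $\mathcal V_p\subset B_p\times T_pM$ is a real-analytic subbundle over a real-analytic manifold $B_p$. Then $Z_p$, the fiberwise isotropic quadric bundle of its complexification, is real-analytic, and its dimension is obtained by a count.

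\textbf{Analyticity of the metric.} First, the existence of an adapted complex structure on $TM$ forces $g$ to be real-analytic. This is standard in Grauert tube theory (Lempert--Sz\H{o}ke, Guillemin--Stenzel): $M$ is the fixed locus of the antiholomorphic involution $\tau$ of Proposition~\ref{fano}(3), hence a real-analytic submanifold of $X$. Moreover, $\rho(v)=\|v\|_g^2$ is determined by the complex structure as the unique strictly plurisubharmonic exhaustion solving the homogeneous complex Monge--Amp\`ere equation. Consequently $\exp_p$ is real-analytic on $T_pM$. An alternative is to read off $\exp_p(tv)$ from the holomorphic leaves $\phi_\gamma$ restricted to the real axis.

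\textbf{Analyticity of $B_p$ and $\mathcal V_p$.} Normalize the period to $l=2\pi$. By the Blaschke condition, $B_p=\exp_p(S_\pi)$ with $S_\pi$ the sphere of radius $\pi$, and the map $\rho:S(T_pM)\to B_p$, $v\mapsto\exp_p(\pi v)$, is the great sphere fibration, a submersion with fibers $S^{a-1}$. I will show that $B_p$ is an embedded real-analytic submanifold of $M$ and that $\rho$ is a real-analytic submersion onto it. Here $B_p$ is the image of the analytic map $v\mapsto\exp_p(\pi v)$ of constant rank $an-a$. Constancy of rank comes from the Jacobi field count in the proof of Proposition~\ref{fano}(5): exactly $a-1$ normal Jacobi fields vanish at $q$. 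Hence the image is an immersed analytic submanifold. Since it is compact and the fibers are connected spheres, it is embedded.

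The fiber $V_q$ is the linear span of $\rho^{-1}(q)$, which equals $\ker d(\exp_p)_{\pi v}\oplus\mathbb R v$ for any $v\in\rho^{-1}(q)$. Taking $v$ along a local analytic section of $\rho$, this kernel depends analytically on $q$. Therefore the classifying map $G:B_p\to\operatorname{Gr}(a,T_pM)$ is real-analytic, and so is $\mathcal V_p^{\mathbb C}\subset B_p\times T_pM^{\mathbb C}$.

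\textbf{Analyticity and dimension of $Z_p$.} We have $Z_p=\{(q,[\xi])\in\mathbb P(\mathcal V_p^{\mathbb C}):g_{\mathbb C}(\xi,\xi)=0\}$, the zero set of a fiberwise nondegenerate quadratic form on an analytic bundle. In a local analytic orthonormal frame of $\mathcal V_p$, it is locally $U\times Q^{a-2}$, a real-analytic submanifold of $\mathbb P(\mathcal V_p^{\mathbb C})$. The map $\mathbb P(\mathcal V_p^{\mathbb C})\to\mathbb P(T_pM^{\mathbb C})$ is injective: distinct fibers $V_q,V_{q'}$ intersect only in $0$, since distinct great spheres are disjoint, and this persists after complexification because $V_q^{\mathbb C}\cap V_{q'}^{\mathbb C}=(V_q\cap V_{q'})^{\mathbb C}$. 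It is also an immersion, because the maps $C_v$ in the proof of Theorem~\ref{hopfiso} are injective. For isotropic $z=\alpha+i\beta$, injectivity of $C_z$ on real vectors follows from the Clifford-type relation~\eqref{J}, or can simply be checked through the same holomorphic-normal-bundle argument. Compactness then makes this an embedding. For the dimension, note that $\dim_{\mathbb R}B_p=an-1-(a-1)=an-a$ and $\dim_{\mathbb R}Q^{a-2}=2a-4$, so $\dim_{\mathbb R}Z_p=a(n+1)-4$.

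The main obstacle is the first step, deducing real-analyticity of $g$ and $\exp_p$ from the adapted complex structure. I would cite the Lempert--Sz\H{o}ke theorem that a metric admitting an adapted complex structure is real-analytic. A second, smaller, point is the immersion property at complex isotropic directions.
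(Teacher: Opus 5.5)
Your proposal is correct, and it takes a more explicit route than the paper. The paper uses the same dimension count and the same Lempert--Sz\H{o}ke citation for analyticity of \(g\). It then describes \(Z_p\) as the set of \([a+ib]\) with \(a,b\in T_pM\) orthonormal and \(\exp_p(\pi a)=\exp_p(\pi b)\). It concludes real-analyticity because these conditions are analytic and \(Z_p\subset\mathbb P(T_pX)\) is already known to be smooth. That is quicker, but it leaves two inputs implicit:
\begin{enumerate}
\item that a smooth submanifold arising as the image of a compact real-analytic set is itself real-analytic (true, but this needs subanalytic geometry, e.g.\ Tamm's theorem);
\item that the natural map from the bundle \(Z_p\subset\mathbb P(\mathcal V_p^{\mathbb C})\) to \(\mathbb P(T_pM^{\mathbb C})\) is an embedding at all.
\end{enumerate}
Your argument supplies both directly. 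The constant-rank theorem for \(v\mapsto\exp_p(\pi v)\) makes \(B_p\) and \(\rho\) analytic, and the identity \(V_q=\ker d(\exp_p)_{\pi v}\oplus\mathbb Rv\) makes the classifying map \(G\) analytic. The quadric bundle is then locally analytically trivial. Finally, \(V_q^{\mathbb C}\cap V_{q'}^{\mathbb C}=(V_q\cap V_{q'})^{\mathbb C}=0\) together with an immersion check yields the embedding. So your version is self-contained where the paper's is terse.

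The one step you should fix is that immersion check. Relation~\eqref{J} was derived under the hypothesis that \(Z_p\) is a complex submanifold. Proposition~\ref{Zalg} obtains exactly that property from the present proposition, so appealing to \eqref{J}, or to the holomorphic-normal-bundle argument behind it, is circular.

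No complex structure is needed. For real \(\xi\in T_qB_p\) and orthonormal \(\alpha,\beta\in V_q\), you have \(C_{\alpha+i\beta}(\xi)=C_\alpha\xi+iC_\beta\xi\) with \(C_\alpha\xi,C_\beta\xi\in W\) real. So \(C_{\alpha+i\beta}(\xi)=0\) forces \(C_\alpha\xi=0\), and hence \(\xi=0\). Injectivity of \(C_\alpha\) for real \(\alpha\neq0\) follows from the map \(S(\mathcal V_p)\to S(T_pM)\), \((q,v)\mapsto v\), being a diffeomorphism; its inverse \(v\mapsto(\rho(v),v)\) is smooth. The differential is also injective on vertical vectors, since \(\mathbb P(V_q^{\mathbb C})\subset\mathbb P(T_pM^{\mathbb C})\) is a linear subspace. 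Combining the horizontal and vertical cases gives the immersion, and compactness then gives the embedding as you say.
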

\begin{proof}
We first compute the real dimension of $Z_p$. Since $\dim_{\mathbb R}Q^{a-2}=2a-4$ and $\dim_{\mathbb R}B_p=an-a$, we have $\dim_{\mathbb R}Z_p=\dim_{\mathbb R}B_p+\dim_{\mathbb R}Q^{a-2}=an+a-4=a(n+1)-4=2d$, where $d:=\frac{a(n+1)}{2}-2$. Moreover, a metric admitting an adapted complex structure is real-analytic by \cite{LS91,S91}. After normalizing the period of every closed geodesic to be $2\pi$, $Z_p$ is characterized by
\begin{align*}
Z_p=\{[a+ib]\in\mathbb P(T_pX):&a,b\in T_pM,\ \|a\|_g=\|b\|_g=1,\ g(a,b)=0,\\
&\exp_p(\pi a)=\exp_p(\pi b)=q\text{ for some }q\in B_p\}.
\end{align*}
Since all the functions and equations appearing in this characterization are real-analytic, and $Z_p$ is already known to be a smooth submanifold as the great quadric bundle, $Z_p$ is real-analytic.
\end{proof}

\begin{prop}\label{Zalg}
There exists a covering minimal rational component $\mathcal K$ on $X$ such that, for a general point $p\in M$, the great quadric bundle $Z_p$ is an irreducible component of the VMRT fiber $\mathcal C_p$ associated with $\mathcal K$. In particular, $Z_p\subset\mathbb P(T_pX)$ is a complex submanifold, and hence Theorem~\ref{hopfiso} applies.
\end{prop}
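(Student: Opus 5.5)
Proposition~\ref{break} already shows that every direction of $Z_p$ is the tangent direction of a minimal rational curve through $p$, and Proposition~\ref{analytic} shows that $Z_p$ is a compact real-analytic submanifold of real dimension $2d$, where $d=\frac{a(n+1)}{2}-2$. The plan is to compare $Z_p$ with the VMRT by dimension and then conclude by an identity-principle argument.

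\emph{Choice of the component.} Since $X$ is Fano, there are finitely many minimal rational components, and Proposition~\ref{break} places every $[\xi]\in Z_p$ in one of the VMRTs $\mathcal C^{\mathcal K_j}_p$. Here $p$ is general and lies in the open sets from the proof of Proposition~\ref{break}. Each $\mathcal C^{\mathcal K_j}_p\subset\mathbb P(T_pX)$ is a closed algebraic subset, because the tangent map is finite on the normalized family at a general point. Hence $Z_p$ is a finite union of the closed subsets $Z_p\cap\mathcal C^{\mathcal K_j}_p$. By Baire, one of them, say for $\mathcal K$, contains a nonempty open subset of $Z_p$.

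\emph{Dimension comparison.} For a minimal rational component through a general point, $\dim_{\mathbb C}\mathcal C_p=-K_X\cdot C-2$. For a curve $C$ of $D$-degree one, Proposition~\ref{fano}(1) gives $-K_X\cdot C=\frac{a(n+1)}{2}$. This uses that the family of curves with $D\cdot C=1$ is unsplit and that $D$ generates the relevant part of $\operatorname{Pic}$, which holds for $a=4,8$ where the Picard number is $1$. Hence $\dim_{\mathbb C}\mathcal C_p=d$. So $\mathcal C_p$ is a complex projective variety of real dimension $2d$, the same as $Z_p$.

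\emph{Identity principle.} An open subset $U$ of the real-analytic $2d$-manifold $Z_p$ lies in $\mathcal C_p$, whose singular locus has real dimension less than $2d$. Therefore $U$ contains an open piece that is an open subset of the smooth locus of some irreducible component $\mathcal C'\subset\mathcal C_p$. Consider the set of points of $Z_p$ near which $Z_p$ locally coincides with $\mathcal C'$. It is open by definition. It is also closed: at a limit point, $Z_p$ and the irreducible analytic germ of $\mathcal C'$ share an open set, and the real-analyticity of $Z_p$ from Proposition~\ref{analytic} together with equal dimension forces them to agree locally. This is the main obstacle, because near singular points of $\mathcal C'$ one must argue with analytic germs rather than manifolds. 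I would handle it by taking the Zariski closure $\overline{Z_p}^{\mathrm{Zar}}$. The smallest complex-analytic set containing a connected real-analytic $2d$-manifold that is generically complex is irreducible, and here it is contained in $\mathcal C'$. Both have dimension $d$, so the closure equals $\mathcal C'$.

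\emph{Conclusion.} Suppose $Z_p$ is connected, as it is for $a=4,8$ since $B_p$ and $Q^{a-2}$ are connected. Then $Z_p$ is open and closed in $\mathcal C'$ minus a lower-dimensional set, and by compactness $Z_p=\mathcal C'$. Thus $Z_p$ is an irreducible component of $\mathcal C_p$, it is smooth and complex-analytic, and it is a complex submanifold. For $a=2$ the same argument applies to each of the two components $Z_\pm$ separately. Theorem~\ref{hopfiso} then applies.
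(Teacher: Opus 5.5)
\textbf{Gap: you choose $\mathcal K$ separately at each point.} For a fixed general $p$ you apply Baire to the finitely many closed sets $Z_p\cap\mathcal C^{\mathcal K_j}_p$ and take whichever component wins. This proves that for each general $p$ there is \emph{some} $\mathcal K=\mathcal K(p)$ with $Z_p$ an irreducible component of $\mathcal C^{\mathcal K(p)}_p$. That is enough for the final sentence: $Z_p$ is a complex submanifold, so Theorem~\ref{hopfiso} applies at $p$. But the proposition asserts the other quantifier order, namely one covering component $\mathcal K$ that works for all general $p\in M$. Nothing in your proposal stops $\mathcal K(p)$ from changing between different open regions of $M$.

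This uniformity is used immediately afterwards: the section $s_M$ of the Stein factorization and the Hwang--Mok recognition theorem both refer to one fixed $\mathcal K$.

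The missing step is to run your Baire and identity-principle argument on the total space $\mathfrak Z=\bigcup_{p\in M}Z_p\subset\mathbb P(TX)$ rather than on a single fiber. This is what the paper does.
\begin{enumerate}
\item $\mathfrak Z$ is a connected real-analytic manifold, by the same characterization as in Proposition~\ref{analytic} with $p$ now varying.
\item Compare it with the total VMRTs $\mathfrak C_j\subset\mathbb P(TX)$. Proposition~\ref{break} together with closedness gives $\mathfrak Z\subset\bigcup_j\mathfrak C_j$.
\item Baire gives some $\mathfrak C_j$ containing a nonempty open subset of $\mathfrak Z$.
\item Local defining functions of $\mathfrak C_j$ restrict to real-analytic functions on $\mathfrak Z$ that vanish on that open set, hence identically.
\end{enumerate}
Thus $\mathfrak Z\subset\mathfrak C_j$, and $\mathcal K:=\mathcal K_j$ works for every general $p$ at once. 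Your fiberwise argument then finishes the proof.

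\textbf{A smaller point: the step you call the main obstacle is not one.} The germ argument you sketch would also not work as stated, since an irreducible $\mathcal C'$ can have locally reducible germs at singular points. Once an open piece of $Z_p$ lies in the algebraic set $\mathcal C'$, its defining polynomials restrict to real-analytic functions on the connected manifold $Z_p$. They vanish on an open set, hence everywhere, so $Z_p\subset\mathcal C'$ directly. This is also the unstated justification for your claim that the Zariski closure of $Z_p$ lies in $\mathcal C'$. From there, your dimension count and the open--closed argument in the smooth locus of $\mathcal C'$ agree with the paper.
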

\begin{proof}
For any minimal rational curve $C$ on $X$, note that $-K_X\cdot C=\frac{a(n+1)}{2}$ by \cite{S25}. Thus, setting $d:=\frac{a(n+1)}{2}-2$, \cite[Corollary~IV.2.9]{K13} gives, for a general member $f:\mathbb P^1\to X$ of a minimal rational component,
\[
f^*T_X\simeq\mathcal O(2)\oplus\mathcal O(1)^{\oplus d}\oplus\mathcal O^{\oplus(an-d-1)}.
\]
Moreover, by \cite[Theorem~1]{HM04}, for a general point $x\in X$, the tangent map $\tau_x:\mathcal K_x\to\mathcal C_x\subset\mathbb P(T_xX)$ is birational. Hence, by deformation theory, $\dim_{\mathbb C}\mathcal C_x=\dim_{\mathbb C}\mathcal K_x=-K_X\cdot C-2=d$. Thus $Z_p$ and $\mathcal C_p$ have the same real dimension.

We now find a minimal rational component $\mathcal K$ satisfying the proposition. Since all rational curves under consideration have $D$-degree one, only finitely many irreducible families occur \cite[Theorem~V.1.6.2]{K13}. Avoid the loci of all non-covering families and choose the general points $p\in M$ appearing in Proposition~\ref{break}. Let $\mathfrak Z$ denote the total space of the bundle $Z_p\to\mathfrak Z\to M$ over all of $M$. Since $M$ and the fibers $Z_p$ are connected, $\mathfrak Z$ is connected. For each covering minimal rational component $\mathcal K_i$, let $\mathcal C_{i,p}$ be its VMRT at $p$, and let $\mathfrak C_i\subset\mathbb P(TX)$ denote the corresponding total VMRT. By Proposition~\ref{break}, over the dense set of general points chosen above, every point of $\mathfrak Z$ belongs to one of the $\mathfrak C_i$. Since $\bigcup_i\mathfrak C_i$ is closed, it follows that $\mathfrak Z\subset\bigcup_i\mathfrak C_i$.

If $\mathfrak Z\cap\mathfrak C_i$ had empty interior in $\mathfrak Z$ for every $i$, finitely many such subsets could not cover $\mathfrak Z$. Hence some $\mathfrak C_i$ contains a nonempty open subset of $\mathfrak Z$. Since $\mathfrak C_i$ is algebraic, its local defining holomorphic functions restrict to real-analytic functions on the connected real-analytic manifold $\mathfrak Z$. They vanish on a nonempty open subset, and hence vanish identically. Therefore $\mathfrak Z\subset\mathfrak C_i$. Fix $\mathcal K:=\mathcal K_i$. Then, for every general $p\in M$, $Z_p\subset\mathcal C_p$.

Writing $\mathcal C_p$ as the finite union of its irreducible components and applying the same argument to $Z_p$, we obtain an irreducible component $C_p\subset\mathcal C_p$ such that $Z_p\subset C_p$. Since $\dim_{\mathbb R}Z_p=2d=\dim_{\mathbb R}C_p$, the smooth submanifold $Z_p$ is open in the smooth locus of $C_p$. Since $Z_p$ is also compact, it follows that $Z_p=C_p$. Thus $Z_p$ is an irreducible component of $\mathcal C_p$. In particular, $Z_p$ is a complex submanifold of $\mathbb P(T_pX)$, and Theorem~\ref{hopfiso} applies.
\end{proof}
Finally, we prove that the compactification $X=TM\sqcup D$ is standard. Together with Proposition~\ref{fano}(4), this proves Theorem~\ref{main}.
\begin{prop}
For the minimal rational component $\mathcal K$ obtained in Proposition~\ref{Zalg}, the VMRT fiber $\mathcal C_z$ at a general point $z\in X$ is projectively equivalent to the standard model. In particular, $X$ is biholomorphic to the corresponding standard compactification $X=TM\cup D$ listed in Table~\ref{amb}.
\end{prop}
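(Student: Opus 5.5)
By Proposition~\ref{Zalg}, for a general point $p\in M$ the great quadric bundle $Z_p$ is an irreducible component of the VMRT $\mathcal C_p$ of $\mathcal K$. By Theorem~\ref{hopfiso}(1), $Z_p$ is projectively equivalent to the standard model: $\mathbb P^1\times\mathbb P^{2n-1}$ for $a=4$ and $\mathbb S_5$ for $a=8$. The plan is to upgrade this in three stages: from one component to the whole of $\mathcal C_p$, from points of $M$ to a general point of $X$, and from the VMRT to $X$ itself via Hwang--Mok recognition.

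\textbf{Step 1: $\mathcal C_p=Z_p$.} First I will show that $\mathcal C_p$ has no other components. From the proof of Proposition~\ref{Zalg}, $\dim\mathcal C_p=d=\dim Z_p$, so every component of $\mathcal C_p$ has the same dimension as $Z_p$. The tangent map $\tau_p$ is birational by \cite{HM04}, so it suffices to show that $\mathcal K_p$ is irreducible for general $p$. I would argue by monodromy. The total VMRT $\mathfrak C\to X$ has a unique irreducible component dominating $X$, since $\mathcal K$ is irreducible. The proof of Proposition~\ref{Zalg} gives $\mathfrak Z\subset\mathfrak C$, and $\mathfrak Z$ is Zariski dense in a component of $\mathfrak C$ because $M$ is Zariski dense in $X$. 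So if $\mathcal C_x$ had several components, they would be permuted transitively by monodromy, and we could not conclude irreducibility this way alone. Instead I will compare degrees. Proposition~\ref{fano}(1) fixes $-K_X\cdot C=a(n+1)/2$, which gives
\[
\dim\mathcal K_p=\frac{a(n+1)}{2}-2.
\]
A Fano manifold of Picard number one whose VMRT has a component projectively equivalent to a smooth linearly nondegenerate model should have irreducible general VMRT. I will try to justify this through the linear span of the component, which is already all of $\mathbb P(T_pX)$, together with Hwang's result that the VMRT components of an irreducible $\mathcal K$ at a general point are all projectively equivalent. This step is the main obstacle. If it resists, the fallback is to apply the Cartan--Fubini extension theorem of Hwang--Mok, whose version for a single irreducible nondegenerate component of positive dimension still yields a biholomorphism.

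\textbf{Step 2: from $p\in M$ to a general $z\in X$.} The set of $z\in X$ whose VMRT is projectively equivalent to the standard model is constructible. It contains the Zariski dense set of general points of the totally real submanifold $M$, since $M$ is maximal totally real. Hence it contains a Zariski open subset of $X$.

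\textbf{Step 3: recognition.} Apply Hwang--Mok recognition (Mok's theorem for $\operatorname{Gr}(2,2n+2)$, and Hwang--Mok for $E_6/P_1$, whose VMRT is $\mathbb S_5$). This applies because $X$ is Fano of Picard number one for $a\neq2$ by Proposition~\ref{fano}(1), and its general VMRT agrees with that of $X_0=\operatorname{Gr}(2,2n+2)$ or $E_6/P_1$. The conclusion is $X\simeq X_0$, and Proposition~\ref{fano}(4) then gives the isometry.
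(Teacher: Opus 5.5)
There is a genuine gap in Step 1, which is the step you yourself flag as the main obstacle. Nothing in your proposal shows that $\mathcal C_p$ has no components other than $Z_p$, or equivalently that $\mathcal K_p$ is connected for general $p$. None of the three tools you invoke does this:
\begin{enumerate}
\item The count $\dim\mathcal K_p=a(n+1)/2-2$ was already used in Proposition~\ref{Zalg} and says nothing about the number of components.
\item That $Z_p$ spans $\mathbb P(T_pX)$ does not exclude further components, since a union of several nondegenerate components spans just as well.
\item Monodromy-equivalence of the components for an irreducible $\mathcal K$ only shows that every component of $\mathcal C_x$ is standard, not that there is only one.
\end{enumerate}
Your fallback does not rescue this. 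Hwang--Mok recognition needs the \emph{whole} general VMRT to be the standard model. The Cartan--Fubini extension theorem needs as input a local biholomorphism $U\to U'\subset X_0$ respecting VMRTs, which you never produce. Building it from a single component means choosing that component holomorphically on an open set of $X$ and then running a flatness argument along minimal rational curves. Those curves leave that open set, so you are back to controlling the monodromy of components.

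The missing idea, which is the core of the paper's proof, uses the leaf structure.
\begin{enumerate}
\item Take the Stein factorization $\mathcal U^\circ\xrightarrow{h}\widetilde X^\circ\xrightarrow{\pi}X^\circ$ of the evaluation map. Here $\pi$ is finite \'etale, its fibers index the connected components of $\mathcal K_z$, and $\widetilde X^\circ$ is connected because $\mathcal K$ is irreducible.
\item By Proposition~\ref{Zalg}, choosing the component $Z_p$ gives a section $s_M$ of $\pi$ over $M^\circ$.
\item Every point of $X\setminus M$ lies on a unique compactified leaf $\phi_\gamma$. By Proposition~\ref{fano}(5), the maximal-slope summand $\mathcal O(2)^{\oplus a}$ of $T_X|_{\phi_\gamma}$ restricts along $\gamma$ to the complexified Blaschke fibers. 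Its isotropic quadric bundle therefore lies in the VMRT along $\gamma$, and by analytic continuation along the whole leaf, so it selects a component at every point of the leaf.
\item This extends $s_M$ to a section $s_X$ over all of $X^\circ$. It is continuous by the real-analytic dependence of the relevant Jacobi-field spaces.
\item A continuous section of a finite covering with connected total space is surjective. Hence $\pi$ has degree one, $\mathcal C_z$ is irreducible for general $z$, and $\mathcal C_p=Z_p$.
\end{enumerate}
Once Step 1 is repaired this way, your Step 2 (constructibility plus Zariski density of the general points of $M$) is a valid substitute for the paper's appeal to local deformation rigidity of $\mathbb P^1\times\mathbb P^{2n-1}$ and $\mathbb S_5$. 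Your Step 3 agrees with the paper.
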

\begin{proof}
We first explain the rough idea. Proposition~\ref{Zalg} gives, over general points of $M$, a section $s_M$ of the finite \'etale cover whose fiber over $z\in X$ parametrizes the connected components of $\mathcal K_z$. We extend this section consistently over $X$, thereby showing that the covering map has singleton fibers. We then use the standard embedded deformation rigidity of the Segre variety $\mathbb P^1\times\mathbb P^{2n-1}$ and the spinor variety $\mathbb S_5$ to obtain the standardness of the VMRT at a general point of $X$. We may then apply the following VMRT recognition theorem of Hwang--Mok \cite[Main Theorem]{M08}.
\begin{thm}
Let $S$ be a rational homogeneous manifold of Picard number $1$ which is either a Hermitian symmetric space or a Fano contact homogeneous manifold. For a reference point $o\in S$, let $\mathcal C_o\subset\mathbb P T_oS$ denote the variety of minimal rational tangents on $S$. Let $X$ be a Fano manifold of Picard number $1$ and $\mathcal K$ a minimal rational component on $X$. Suppose that the variety of $\mathcal K$-rational tangents $\mathcal C_x\subset\mathbb P T_xX$ at a general point $x\in X$ is isomorphic to $\mathcal C_o\subset\mathbb P T_oS$ as a projective subvariety. Then $X$ is biholomorphic to $S$.
\end{thm}

We now formalize this idea. Let $\operatorname{ev}:\mathcal U\to X$ be the evaluation morphism of the normalized universal family. Choose a dense Zariski-open subset $X^\circ\subset X$ over which $\operatorname{ev}$ is smooth with fibers of constant dimension, and consider the Stein factorization
\[
\mathcal U^\circ\xrightarrow{h}\widetilde X^\circ\xrightarrow{\pi}X^\circ.
\]
The space $\widetilde X^\circ$ parametrizes the connected components of the fibers of $\operatorname{ev}$. The morphism $\pi$ is finite \'etale, and for a general point $z\in X$ the fiber $\pi^{-1}(z)$ corresponds to the connected components of $\mathcal K_z$. Recall also that the tangent map $\tau_z:\mathcal K_z\to\mathcal C_z$ is birational. By Proposition~\ref{Zalg}, the distinguished component $Z_p\subset\mathcal C_p$ therefore determines, for $p\in M^\circ:=M\cap X^\circ$, a section
\[
s_M:M^\circ\longrightarrow\widetilde X^\circ,\qquad \pi\circ s_M=\operatorname{id}.
\]

We extend $s_M$ to $X^\circ$. Since every point of $X\setminus M$ lies on a unique compactified leaf $\phi_\gamma$, we first extend it leafwise. By Proposition~\ref{fano}(5), recall that
\[
T_X|_{\phi_\gamma}\simeq\mathcal O(2)^{\oplus a}\oplus\mathcal O(1)^{\oplus(an-a)}.
\]
The maximal-slope subbundle $\mathcal O(2)^{\oplus a}$ is induced by the Jacobi fields having a conjugate point. Along the real geodesic $\gamma\subset M$, its fibers are precisely the fibers of the complexified Blaschke bundle $V^{\mathbb C}$. Hence the associated quadric bundle $\mathcal Q$ is, over $M$, a subbundle of the VMRT family whose fiber at $p$ is contained in $Z_p$. Since $\gamma\subset\phi_\gamma$ is maximal totally real and real-analytic, the inclusion, being defined by the vanishing of algebraic equations, extends to the whole leaf. Thus, wherever it is defined, every fiber of $\mathcal Q$ is contained in a VMRT fiber. Since each fiber of $\mathcal Q$ is connected, choosing the irreducible component containing it extends the section $s_M$ along the whole leaf.

Since every point of $X\setminus M$ lies on a unique leaf $\phi_\gamma$, these leafwise extensions define a section $s_X:X^\circ\to\widetilde X^\circ$. This section is continuous: since a metric admitting an adapted complex structure is real-analytic \cite{S91}, the spaces of Jacobi fields vanishing at $p$ and at an antipodal point $q\in B_p$, which determine the subbundle $\mathcal O(2)^{\oplus a}$, vary real-analytically with $q$.

It follows that, for a general point $z\in X$, the space $\mathcal K_z$, and hence the VMRT fiber $\mathcal C_z$, is connected. In particular, for a general $p\in M$ we have $\mathcal C_p=Z_p$, which is projectively equivalent to either the Segre variety $\mathbb P^1\times\mathbb P^{2n-1}$ or the spinor variety $\mathbb S_5$. The Segre variety $\mathbb P^1\times\mathbb P^{2n-1}$ and the spinor variety $\mathbb S_5$ are locally rigid under deformation; for the former this follows from Bott's vanishing and Kodaira--Spencer deformation theory \cite{B57,KS60}, while for the latter we use \cite[Theorem~1']{HM98}. Since the corresponding embeddings are given by their distinguished polarizations $\mathcal O(1,1)$ and $\mathcal O(1)$, respectively, the nearby VMRT fibers are projectively equivalent to the standard models. Hence the equality $\mathcal C_p=Z_p$ over general points of $M$ implies that the VMRT at a general point of $X$ is projectively equivalent to the corresponding standard model. The Hwang--Mok VMRT recognition theorem above therefore gives the desired biholomorphism.
\end{proof}

\bibliography{KSong}
\bibliographystyle{plain}

\vskip 3mm
\noindent
Department of Mathematics, Rutgers University, Piscataway, NJ 08854-8019.

\noindent
{\it Email:} ks1951@rutgers.edu

\end{document}